\newcommand{\rrvert}{\vert}
\newcommand{\llvert}{\vert}
\newcommand{\cal}{\mathcal}
\newtheorem{theorem}{Theorem}[section]
\newtheorem{lemma}[theorem]{Lemma}
\newtheorem{proposition}[theorem]{Proposition}
\def\R{{\mathbb R}}
\def\E{{{\mathbb E}}}
\def\N{{\mathbb N}}
\def\si{\sigma}
\def\Var{\operatorname{Var}}
\def\Om{{\Omega}}
\def\cF{{\cal F}}
\def\be{{\beta}}
\def\de{{\delta}}
\begin{document}
\begin{frontmatter}

\title{Central limit theorem for an additive functional of the fractional Brownian motion}
\runtitle{CLT for an additive functional of the fBm}

\begin{aug}
\author[A]{\fnms{Yaozhong} \snm{Hu}\thanksref{t1}\ead[label=e1]{hu@math.ku.edu}},
\author[A]{\fnms{David} \snm{Nualart}\thanksref{t2}\ead[label=e2]{nualart@math.ku.edu}}
\and
\author[B]{\fnms{Fangjun} \snm{Xu}\corref{}\thanksref{t3}\ead[label=e3]{fangjunxu@gmail.com}}
\runauthor{Y. Hu, D. Nualart and F. Xu}
\affiliation{University of Kansas, University of Kansas and\break  East China Normal University}
\address[A]{Y. Hu\\
D. Nualart\\
Department of Mathematics\\
University of Kansas\\
Lawrence, Kansas 66045\\
USA\\
\printead{e1}\\
\hphantom{E-mail: }\printead*{e2}} 
\address[B]{F. Xu\\
School of Finance and Statistics\\
East China Normal University\\
Shanghai, China 200241\\
\printead{e3}}
\end{aug}

\thankstext{t1}{Supported in part by a grant from the Simons
Foundation \#209206.}
\thankstext{t2}{Supported by NSF Grant DMS-12-08625.}
\thankstext{t3}{Supported in part by the Robert Adams Fund.}

\received{\smonth{11} \syear{2011}}
\revised{\smonth{8} \syear{2012}}

%
\begin{abstract}
We prove a central limit theorem for an additive functional of the
$d$-dimensional fractional Brownian motion with Hurst index
$H\in(\frac{1}{1+d},\frac{1}{d})$, using the method of moments,
extending the result by Papanicolaou, Stroock and Varadhan in the case
of the standard Brownian motion.
\end{abstract}

%
\begin{keyword}[class=AMS]
\kwd[Primary ]{60F05}
\kwd[; secondary ]{60G22}
\end{keyword}
\begin{keyword}
\kwd{Fractional Brownian motion}
\kwd{central limit theorem}
\kwd{local time}
\kwd{method of moments}
\end{keyword}

\end{frontmatter}

\section{Introduction}\label{sec1}
Let $ \{ B(t)=(B^1(t),\ldots,
B^d(t)), t\geq0 \}$ be a $d$-dimen-\break sional fractional Brownian
motion (fBm) with Hurst index $H\in(0,1)$. If $Hd<1$, then the local
time of $B$ exists (see, e.g., \cite{GH,HN,HNS}) and can be
defined as
\[
L_t(x)=\int_0^t \delta
\bigl(B(s)-x\bigr)\,ds,\qquad t\ge0, x\in\R^d,
\]
where $\de$ is the Dirac delta function. The above local time is jointly
continuous with respect to $t$ and $x$; see \cite{GH}. For any
integrable function $f\dvtx \R^d \rightarrow\R$, one can easily show the
following convergence in law in the space $C([0,\infty))$,
as $n$ tends to infinity:
%
\begin{equation}
\label{e11} \biggl( n^{Hd-1} \int_0^{nt}
f\bigl(B(s)\bigr)\,ds, t\ge0 \biggr) \stackrel{\mathcal{L}} {\rightarrow}
\biggl( L_t(0) \int_{\R^d} f(x)\,dx, t\ge0 \biggr).
\end{equation}
In fact, making the change of variable $s=nu$, and using the
scaling property of the fBm, we see that the process $ (
n^{Hd-1} \int_0^{nt} f(B(s))\,ds, t\ge0 ) $ has the same law
as
\begin{eqnarray*}
n^{Hd} \int_0^t f
\bigl(n^HB(u)\bigr)\,du &=& n^{Hd} \int_{\R^d}
f\bigl(n^Hx\bigr) L_t(x)\,dx
\\
&=& \int_{\R^d} f( x) L_t\bigl(n^{-H}x
\bigr)\,dx,\qquad t\ge0.
\end{eqnarray*}
From here it is straightforward to verify (\ref{e11}).

If we assume that $\int_{\R^d}f(x)\,dx=0$, then we see $n^{Hd-1} \int_0^{nt} f(B(s))\,ds$ converges to $0$. It is interesting to know if
there is a $\beta>Hd-1$ such that $n^{\beta} \int_0^{nt} f(B(s))\,ds$ converges to a nonzero process. This will be proved to be true.
In order to formulate this result we introduce the
following space of functions. Fix a number $\beta>0$, and denote
\[
H_{0}^{\beta}= \biggl\{f \in L^{1}\bigl(
\R^d\bigr)\dvtx  \int_{\R^d}\bigl|f(x)\bigr| |x|^{\beta}
\,dx<\infty \mbox{ and } \int_{\R^d }f(x)\,dx=0 \biggr\}.
\]
For any $f\in H_{0}^{\beta}$, and assuming $\beta\in(0,2)$, by Lemma
\ref{lema1}, the quantity
%
\begin{equation}
\label{beta} \Vert f\Vert_{\beta}^{2}:=-\int
_{\R^{2d}}f(x)f(y)|x-y|^{\beta}\,dx\,dy
\end{equation}
is finite and nonnegative. The next theorem is the main result of this paper.

\begin{theorem}
\label{th1} Suppose $\frac{1}{d+1}<H<\frac{1}{d}$ and $f\in H^{{1}/{H}-d}_0$.
Then
\[
\biggl( n^{({Hd-1})/{2}}\int_{0}^{nt} f
\bigl(B(s)\bigr)\,ds, t\ge0 \biggr) \stackrel{\mathcal{L}} {
\longrightarrow} \Bigl( \sqrt{C_{H,d}} \| f\|_{{1}/{H}-d} W
\bigl(L_{t}(0)\bigr), t\ge 0 \Bigr)
\]
in the space $C([0,\infty))$, as $n$ tends to infinity, where
``$\stackrel{\mathcal{L}}{\longrightarrow}$'' denotes the convergence
in law, $W $ is a real-valued standard Brownian motion independent of
$B$ and
\begin{eqnarray*}
C_{H,d}& = &\frac2 {(2\pi)^{d/2}} \int
_0^\infty w^{-Hd} \biggl(1-\exp\biggl(-
\frac{1}{2w^{2H}}\biggr) \biggr) \,dw
\\
&=& \frac{2^{1-{1}/({2H})}}{(1-Hd)\pi^{d/2}}\Gamma
\biggl(\frac{Hd+2H-1}{2H} \biggr).
\end{eqnarray*}
\end{theorem}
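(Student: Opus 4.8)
The plan is to prove convergence of the finite-dimensional distributions by the method of moments and then establish tightness, so that weak convergence in $C([0,\infty))$ follows (it suffices to work on each $C([0,T])$). Write $F_n(t)=n^{(Hd-1)/2}\int_0^{nt}f(B(s))\,ds$. The limiting process $\sqrt{C_{H,d}}\,\|f\|_{1/H-d}\,W(L_t(0))$ is, conditionally on $B$, a centered Gaussian process whose increment over $[t_{j-1},t_j]$ has variance $C_{H,d}\|f\|^2_{1/H-d}\,(L_{t_j}(0)-L_{t_{j-1}}(0))$, with increments over disjoint intervals conditionally independent; hence its joint moments are obtained from the conditional Wick formula followed by averaging the resulting powers of local-time increments, and the mixed-Gaussian limit law is determined by these moments. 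Fixing $0=t_0<t_1<\dots<t_m$ and integers $a_1,\dots,a_m$, the goal is therefore to show that
\[
\E\Big[\prod_{j=1}^m\Big(F_n(t_j)-F_n(t_{j-1})\Big)^{a_j}\Big]\longrightarrow \E\Big[\prod_{j=1}^m\Big(\sqrt{C_{H,d}}\,\|f\|_{1/H-d}\,\big(W(L_{t_j}(0))-W(L_{t_{j-1}}(0))\big)\Big)^{a_j}\Big].
\]

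Expanding the left-hand side with $N=\sum_j a_j$ gives $n^{(Hd-1)N/2}$ times an integral, over the product of the time blocks $[nt_{j-1},nt_j]$, of $\E[\prod_{i=1}^N f(B(s_i))]$. I would represent this expectation through the Fourier transform, writing
\[
\E\Big[\prod_{i=1}^N f(B(s_i))\Big]=\frac{1}{(2\pi)^{dN}}\int_{\R^{dN}}\prod_{i=1}^N\wh f(\xi_i)\,\exp\Big(-\tfrac12\Var\Big(\sum_{i=1}^N\xi_i\cdot B(s_i)\Big)\Big)\,\prod_{i=1}^N d\xi_i,
\]
the Gaussian quadratic form being built from the covariance $R_H(s,r)=\frac12(s^{2H}+r^{2H}-|s-r|^{2H})$. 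The hypothesis $\int_{\R^d}f=0$ forces $\wh f(0)=0$; and since the range $\frac1{d+1}<H<\frac1d$ is precisely $0<\beta<1$ for $\beta:=\frac1H-d$, the membership $f\in H_0^{\beta}$ (one vanishing moment together with $\int_{\R^d}|f(x)||x|^{\beta}dx<\infty$) yields the small-frequency bound $|\wh f(\xi)|\lesssim|\xi|^{\beta}$. This is the natural Hölder regularity of order $<1$ that makes the frequency integrals below converge near the origin.

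The heart of the proof is to identify the dominant contribution. I expect that, after ordering the time variables, the leading term comes from configurations in which the $N$ points cluster into $N/2$ pairs of nearby times — each pair having its two times at distance $O(1)$ while distinct pairs are separated by distances of order $n$ — so that pairing is forced to occur within a single block; in particular each $a_j$ must be even (otherwise the moment vanishes), which reproduces both the conditional independence of increments and the vanishing of odd Wick moments. For one pair of nearby times $(u,u+v)$ with $u$ large, a Laplace-type analysis of the quadratic form (the large terms concentrate $\xi_i+\xi_j$ near $0$ at scale $u^{-H}$, while the residual $v^{2H}|\xi_i|^2$ survives) produces a factor $(2\pi)^{d/2}u^{-Hd}$ times $\int_{\R^d}|\wh f(\xi)|^2 e^{-v^{2H}|\xi|^2/2}\,d\xi$; integrating $v$ over $(0,\infty)$ and $u$ against the $n$-scaling yields exactly $C_{H,d}\|f\|^2_{1/H-d}$ per pair together with a power of the local-time increment, the latter via $n^{Hd-1}\int_0^{nt}u^{-Hd}\,du=(2\pi)^{d/2}\E[L_t(0)]$ and its multi-center analogue (moments of the local time). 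Summing over pairings reproduces the Wick formula on the right-hand side, and the explicit single-pair constant — using $\int_0^\infty e^{-v^{2H}a}\,dv=\frac{1}{2H}a^{-1/(2H)}\Gamma(\frac1{2H})$ and the Lemma identifying $\int_{\R^d}|\wh f(\xi)|^2|\xi|^{-1/H}\,d\xi$ with $\|f\|^2_{1/H-d}$ — gives the stated value of $C_{H,d}$.

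The main obstacle is the rigorous control of this pairing expansion: one must show that non-paired configurations (three or more times mutually close, or times belonging to different pairs colliding) contribute at strictly lower order. For this I would invoke the strong local nondeterminism of fBm to bound the Gaussian quadratic forms from below and to decouple the blocks, and it is precisely here, together with the small-frequency estimate $|\wh f(\xi)|\lesssim|\xi|^{\beta}$ with $0<\beta<1$, that the constraint $\frac1{d+1}<H<\frac1d$ (equivalently $0<1-Hd<H$) guarantees that the collision integrals converge and are negligible. Finally, the same Fourier and local-nondeterminism estimates, applied as inequalities rather than exact asymptotics, should give a uniform-in-$n$ bound $\E[|F_n(t)-F_n(s)|^{2p}]\le C|t-s|^{p(1-Hd)}$ for $p$ large; since $1-Hd>0$, tightness then follows from Kolmogorov's criterion, completing the proof.
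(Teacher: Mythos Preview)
Your overall strategy---method of moments via the Fourier representation, local nondeterminism to control the Gaussian quadratic form, a pairing mechanism to identify the leading term, and a Kolmogorov moment bound for tightness---is exactly the paper's approach, and your identification of where the restriction $H>\frac{1}{d+1}$ enters is correct.

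One point where your heuristic diverges from the actual argument: you describe the leading contribution as a sum over all pairings of the $N$ time points into $N/2$ close pairs, with the Wick combinatorics arising from that sum. In the paper's proof there is no sum over pairings. Instead, one orders the times $s_1<\cdots<s_N$ within each block, picks up the factor $\mathbf{m}!=\prod m_i!$, and then the \emph{only} relevant pairing is the consecutive one $(s_{2k-1},s_{2k})$; the change of variables $u_{2k}=s_{2k}$, $u_{2k-1}=n(s_{2k}-s_{2k-1})$ makes this explicit. The factor $(m_i-1)!!$ in the limiting moment comes from matching $\mathbf{m}!$ against the symmetry of the local-time integral, not from counting pairings. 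The subtle step you gloss over is showing that this consecutive pairing really dominates: one must rule out the region where $s_{2k}-s_{2k-1}>s_{2k-1}-s_{2k-2}$ (the ``wrong'' neighbor is close), which is a separate estimate (Proposition~3.3) and is precisely what allows extending the $u_{2k-1}$-integral from $[0,K]$ to $[0,\infty)$. Your phrase ``three or more times mutually close'' does not quite capture this; the real enemy is a single mis-paired gap.

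A second small caveat: your Fourier inversion and the bound $|\wh f(\xi)|\lesssim|\xi|^{\beta}$ tacitly assume enough regularity of $f$. The paper handles general $f\in H_0^{1/H-d}$ by first proving the tightness bound (your final inequality) for all such $f$, then reducing the moment computation to smooth compactly supported $f$ by approximation.
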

Notice\vspace*{1pt} that $\frac{Hd-1}{2}>Hd-1$ since $H<\frac{1}{d}$. When $d=1$
and $H=\frac12$, the above theorem is
obtained by Papanicolaou, Stroock and Varadhan in
\cite{PSV} with \mbox{$C_{1/2, 1}=2$}. On the other hand, the constant
$C_{H,d}$ is finite for any
$H>\frac{1}{d+2}$. We conjecture that our result also holds for $\frac
{1}{d+2}<H<\frac{1}{d}$. But we have not been able to show our
result in the case $H\leq\frac{1}{d+1}$. The main reason is that in
the proof of Proposition \ref{prop4} we need $H>\frac{1}{d+1}$; see
the \hyperref[Rem]{remark} at the end of Section
\ref{sec3}.\vspace*{1pt}\vadjust{\goodbreak}

In the critical case $Hd=1$, the local time does not exist. For the
Brownian motion case ($H=\frac12$ and $d=2$), Kallianpur and Robbins
\cite{KaRo} proved that for any bounded and integrable function
$f\dvtx \mathbb{R}^2\rightarrow\mathbb{R}$,
\[
\frac1{\log n} \int_0^n f(B_s)\,ds
\stackrel{\mathcal {L}} {\longrightarrow} \frac{Z}{2\pi} \int
_{\mathbb{R}^2 }f(x)\,dx
\]
as $n$ tends to infinity, where $Z$ is a random variable with
exponential distribution of parameter $1$. A functional version of this
result was given
by Kasahara and Kotani in \cite{KaKo}, where they also proved the
second-order results when $\int_{\R^2 }f(x)\,dx=0$. The
Kallianpur--Robbins law was extended to the fBm by K\^ono in \cite
{Kon}, and the corresponding functional version was obtained by
Kasahara and Kosugi in \cite{KaKos}. However, second-order results for
the fBm in the critical case $Hd=1$ have not been yet proved. On the
other hand, we refer to Biane \cite{biane} for some extensions of
these results to the case of functionals of $k$ independent Brownian motions.

The limit theorem proved in this paper, where an independent source of
noise appears in the limit, might be connected to the central limit
theorems for weighted power variations of the fractional Brownian
motion in the critical cases $H=\frac14$ and $H=\frac16$ (see the
works by Nourdin and R\'eveillac \cite{NoRe} and Nourdin, R\'eveillac
and Swanson \cite{NoReSw}), where a similar phenomenon happens.
However, the method of proof in these papers relies on the techniques
of Malliavin calculus, and it is related to a general central limit
theorem for multiple Skorohod integrals obtained by Nourdin and Nualart
in \cite{NoNu}. These techniques do not seem to work for the class of
additive functionals considered in this paper.

We would like to give a heuristic explanation of the fact that an
independent noise appears in the limit, and also to indicate the main
ideas of the proof. First, by the scaling property of the fractional
Brownian motion, we can consider the continuous process
%
\begin{equation}
\label{Fn} F_{n}(t):=n^{({1+Hd})/{2}}\int_{0}^{t}
f\bigl(n^{H}B(s)\bigr)\,ds.
\end{equation}
Making the change of variables $u_1=n(s_2-s_1)$ and $u_2=s_2$, we can
write formally
\begin{eqnarray*}
F_{n}(t)^2 &=& 2n^{1+Hd}\int_{0}^{t}
\int_{0}^{s_2} f\bigl(n^{H}B(s_1)
\bigr)f\bigl(n^{H}B(s_2)\bigr) \,ds_1\,ds_2
\\
&=& 2n^{Hd} \int_{0}^{t} \int
_{0}^{n u_2} f\biggl(n^{H}B
\biggl(u_2- \frac
{u_1}n\biggr)\biggr)f\bigl(n^{H}B(u_2)
\bigr) \,du_1\,du_2
\\
&=& 2n^{ Hd} \int_{0}^{t} \int
_{0}^{n u_2} \int_{\mathbb{R}^d} f
\biggl(n^{H} \biggl(B\biggl(u_2- \frac{u_1}n\biggr)
-B(u_2) \biggr) + n^Hz \biggr)
\\
&&\hspace*{81.2pt}{} \times f\bigl(n^H z\bigr) \delta\bigl(B(u_2)-z\bigr)
\,dz\,du_1 \,du_2.
\end{eqnarray*}
The change of variable $n^Hz=x$ yields
%
\begin{eqnarray}
\label{u1} F_{n}(t)^2
&=& 2 \int_{0}^{t} \int_{0}^{n u_2}
\int_{\mathbb{R}^{d}} f\bigl(B_n(u_1,u_2)
+ x\bigr) \nonumber\\[-8pt]\\[-8pt]
&&\hspace*{62.3pt}{}\times f(x) \delta\biggl(B(u_2)-\frac{x}{n^H}\biggr) \,dx\,du_1
\,du_2,
\nonumber
\end{eqnarray}
where $B_n(u_1,u_2)= n^{H} (B(u_2- \frac{u_1}n) -B(u_2) )$.
Notice that $B_n(u_1,u_2)$ is a $d$-dimensional centered Gaussian
vector whose components are independent and with variance $u_1^{2H}$.
Using the covariance function of the fractional Brownian motion, it is
easy to show that for any $u_1, u_2, u_3 \ge0$, $(B_n(u_1,u_2),
B(u_3))$ (assuming $u_1 \le nu_2$) converges in law to
\[
\bigl(B_\infty(u_1, u_2), B(u_3)
\bigr),
\]
where $B_\infty(u_1, u_2)$ is independent of $B(u_3)$. As a consequence,
\begin{eqnarray*}
&& \lim_{n\rightarrow\infty} \mathbb{E} \biggl( f\bigl(B_n(u_1,u_2)
+ x\bigr) \delta\biggl(B(u_2)-\frac{x}{n^H}\biggr) \biggr)
\\
&&\qquad = \mathbb{E} \bigl( f\bigl(B_\infty(u_1,u_2)
+ x\bigr) \bigr) \mathbb{E} \bigl(\delta\bigl(B(u_2) \bigr)\bigr).
\end{eqnarray*}
Assuming that we can commute the expectation with the integrals, this
formally yields
\begin{eqnarray*}
\lim_{n\rightarrow\infty} \mathbb{E} \bigl(F_n(t)^2
\bigr)&=&\frac2{(2\pi )^{d/2}} \E \biggl(\int_0^t
\delta\bigl(B(u_2)\bigr) \,du_2 \biggr)
\\
&&{} \times\int_0^\infty\int_{\R^{2d}}
f(x+y)f(x) \\
&&\hspace*{18.5pt}\qquad\quad{}\times u^{-Hd}_1 \bigl(
e^{-{|y|^2}/({2u_1^{2H}})} -1 \bigr)\,dy \,dx\,du_1,
\end{eqnarray*}
where we can add the term $-1$ because the integral of $f$ is zero. The
right-hand side of the above expression
is equal to
\begin{eqnarray*}
&& \frac2{(2\pi)^{d/2}} \E \bigl(W\bigl(L_t(0)
\bigr)^2 \bigr) \\
&&\quad{}\times\int_0^\infty\int
_{\R^{2d}} f(x+y)f(x) u^{-Hd}_1\bigl(
e^{-{|y|^2}/({2u_1^{2H}})} -1\bigr)\,dy\,dx\,du_1
\\
&&\qquad= \frac2{(2\pi)^{d/2}} \E \bigl(W\bigl(L_t(0)
\bigr)^2 \bigr)
\\
&&\qquad\quad{} \times\int_0^\infty w^{-Hd}\bigl(1-
e^{-{1}/({2w^{2H}})}\bigr)\,dw \int_{\R^{2d}} -f(x+y)f(x)
|y|^{{1}/{H}-d}\,dy\,dx
\\
&&\qquad=C_{H,d} \| f\|^2_{{1}/{H}-d} \E \bigl(W
\bigl(L_t(0)\bigr)^2 \bigr),
\end{eqnarray*}
and this shows the convergence of the moments of order two.
Roughly speaking, the term $B(u_2)$ appearing in (\ref{u1})
contributes to the local time at zero whereas $B_n(u_1,u_2)$ becomes
independent of $B$ in the limit and contributes to the constant $C_{H,d}$.
The main technical difficulty is the commutation of the expectation
with the limit, and for this we will implement a convenient truncation
argument. The idea is to replace the interval $[0,nu_2]$ by a compact
set $[0,K]$, and then show that the integral over $[K, nu_2]$ converges
to zero as $K$ tends to infinity, uniformly in~$n$.
However, this convergence holds only if we integrate over $[K, \frac
{nu_2}2]$, and for this reason, we need to show (see Proposition \ref{prop3})
that the integral over $[\frac{nu_2}2, nu_2]$ tends to zero as $n$
tends to infinity.
We will make this heuristic computation rigorous when we compute the
limit of the moments of even order of a vector of increments of the
process $F_n(t)$.
In this case, we will have an even product of factors, and for each
couple of consecutive factors, we will make the above change of variables.

The basic idea of the approach used in this paper is to apply the
method of moments to an additive functional, and when dealing with an
integral on $[0,t]^{2m}$, with respect to the measure $ds_1 \cdots
ds_{2m}$, we make the change of variables
$u_{2k-1}=n(s_{2k}-s_{2k-1})$ and $u_{2k}=s_{2k}$, $1\le k\le m$. Then
the increments of $B$ in small intervals will be responsible for the
independent noise appearing in the limit. This methodology could be
applied to other examples of additive functionals and processes.

After some preliminaries in Section \ref{sec2}, Section \ref{sec3} is devoted to the
proof of Theorem~\ref{th1}, based on the method of moments.
Throughout this paper, if not mentioned otherwise, the letter $c$,
with or without a subscript, denotes a generic positive finite
constant whose exact value is independent of $n$ and may change from
line to line. We use $\iota$ to denote $\sqrt{-1}$.

\section{Preliminaries}\label{sec2}
Let $ \{ B(t)= (B^1(t),\ldots,
B^d(t)), t\geq0 \} $ be a $d$-dime-\break nsional fractional Brownian
motion with Hurst index $H\in(0,1)$, defined on some probability space
$(\Om, \cF, P)$. That is, the components
of $B$ are independent centered Gaussian processes with covariance
\[
\E \bigl(B^i(t)B^i(s) \bigr)=\tfrac{1}{2}
\bigl(t^{2H}+s^{2H}-|t-s|^{2H} \bigr).
\]

We refer to \cite{BHOZ} for a detailed analysis of this process.

The next lemma gives a formula for the moments of the increments of the process
$ \{W (L_{t}(0))\dvtx  t\ge0 \}$ on disjoint intervals, where
$W$ is a real-valued standard Brownian motion
independent of $B$.
%
\begin{lemma} \label{lema2} Fix a finite number of disjoint intervals
$(a_i, b_i]$ in $[0,\infty)$, where $i = 1, \ldots, N$ and $b_i \leq a_{i+1}$.
Consider a multi-index $\mathbf{m} = (m_1, \ldots, m_N)$, where $m_i
\geq1$ and
$1 \leq i \leq N$. Then
%
\begin{equation}
\label{21} \E \Biggl(\prod_{i=1}^N
\bigl[W \bigl(L_{b_i} (0) \bigr) - W \bigl(L_{a_i} (0)\bigr)
\bigr]^{m_i} \Biggr)
\end{equation}
is equal to
\[
\Biggl(\prod_{i=1}^N \frac{m_i!}{ 2^{{m_i}/{2}} (2\pi
)^{{m_id}/{4} } (m_i/2)!}
\Biggr) \int_{\prod_{i=1}^N [a_i,b_i]^{{m_i}/{ 2}}} \det
\bigl(A(w)\bigr)^{-1/2}\,dw,
\]
if all $m_i$ are even and $0$ otherwise, where $A(w)$ is the covariance
matrix of the Gaussian random vector
\[
\biggl(B\bigl(w^i_k\bigr)\dvtx  1\leq i\leq N \mbox{ and } 1
\leq k\leq\frac
{m_i}{2} \biggr).
\]
\end{lemma}

\begin{pf}
It is easy to see that when one of $m_i$ is odd, the expectation is
$0$. Suppose now that all $m_i$ are even. Denote by $\cF^B$ the $\si
$-algebra generated by the fractional Brownian motion $B$. Since $W$ is
a standard Brownian motion independent of $B$, we have
\begin{eqnarray*}
&&\E\prod_{i=1}^N \bigl[W
\bigl(L_{b_i} (0) \bigr) - W \bigl(L_{a_i} (0)\bigr)
\bigr]^{m_i}
\\[-0.7pt]
&&\qquad= \E \Biggl\{ \E \Biggl( \prod_{i=1}^N
\bigl[W \bigl(L_{b_i} (0) \bigr) - W \bigl(L_{a_i} (0)\bigr)
\bigr]^{m_i} \Big|\cF^B \Biggr) \Biggr\}
\\[-0.7pt]
&&\qquad= \Biggl[ \prod_{i=1}^N
\frac{m_i!}{ 2^{{m_i}/{2}} (m_i/2)!} \Biggr] \E\prod_{i=1}^N
\bigl[ L_{b_i} (0) - L_{a_i} (0) \bigr]^{m_i/2}
\\[-0.7pt]
&&\qquad= \Biggl(\prod_{i=1}^N
\frac{m_i!}{ 2^{{m_i}/{2}} (2\pi
)^{{m_id}/{4} } (m_i/2)!} \Biggr) \int_{\prod_{i=1}^N [a_i,b_i]^{m_i/ 2}} \det \bigl(A(w)
\bigr)^{-1/2} \,dw.
\end{eqnarray*}
This completes the proof.
\end{pf}

We shall use the following local nondeterminism property of the fractional
Brownian motion (see \cite{Be}): for any $n\ge2$ there exists a
positive constant $k_H$ depending on $n$, such that for any
$0=s_0<s_1\leq\cdots\leq
s_n<\infty$ and $ u_1,\ldots, u_n\in\R^d$,
%
\begin{equation}\label{lndp}
\Var \Biggl(\sum^n_{i=1} u_i
\cdot \bigl(B(s_i)-B(s_{i-1}) \bigr) \Biggr)\geq
k_H\sum^n_{i=1}|u_i|^2(s_i-s_{i-1})^{2H}.
\end{equation}
This can also be written as
%
\begin{equation}\label{modified-lndp}
\Var \Biggl(\sum^n_{i=1} u_i
\cdot B(s_i) \Biggr)\geq k_H\sum
^n_{i=1} \Biggl|\sum_{j=i}^n
u_j \Biggr| ^2(s_i-s_{i-1})^{2H}.
\end{equation}

We claim that the law of the random vector $ (W (L_{b_i} (0) ) - W
(L_{a_i} (0))\dvtx  1\le i\le N )$ is determined by
the moments computed in Lemma \ref{lema2}. This is a consequence of
the following estimates. Fix an even integer $n=2k$, and set
$D_k= \{ s\in[0,t]^{k}\dvtx  0< s_1< s_2< \cdots< s_k< t  \}$. Let
$A_k(s)$ be the covariance matrix of Gaussian random vector $
(B(s_1), B(s_2), \ldots, B(s_k) )$. Then the local nondeterminism
property (\ref{lndp})
implies that
%
\begin{equation}\label{det}
\bigl(\det A_k(s) \bigr)^{-{1}/{2}} \le c^k \prod
^k_{i=1}(s_i-s_{i-1})^{-Hd}
\qquad\mbox{for some constant } c.
\end{equation}
As a consequence of (\ref{21}) and (\ref{det}),
\begin{eqnarray*}
\E \bigl[W \bigl(L_{t}(0)\bigr) \bigr]^n & \le&
c^k n! \int_{D_k}s_1^{-Hd}
(s_2-s_1)^{-Hd} \cdots (s_k-s_{k-1})^{-Hd}
\,ds
\\[-0.7pt]
& = & c^k n! \int_{\{0<u_1 + \cdots+ u_k <t\}} \prod
_{i=1}^k u_i^{-Hd}\,du
\\[-0.7pt]
& = & c^k n! t^{k(1-Hd)}\frac
{\Gamma^{k}(1-Hd)}{\Gamma (k(1-Hd)+1 )}.
\end{eqnarray*}
Therefore, $\E [W (L_{t}(0)) ]^n $ is bounded by $c^k n!/
\Gamma (k(1-Hd)+1 )$, and this easily implies the desired
characterization of the law of the increments of the process
$ \{W (L_{t}(0))\dvtx  t\ge0 \}$ on disjoint intervals by its moments.

\section{\texorpdfstring{Proof of Theorem \protect\ref{th1}}
{Proof of Theorem 1.1}}\label{sec3}

By the scaling property of the fractional Brownian motion we
see that, as processes indexed by $t\ge0$,
\[
n^{({Hd-1})/{2}}\int_{0}^{nt} f\bigl(B(s)\bigr)
\,ds \stackrel{\mathcal{L}} {=} n^{({1+Hd})/{2}}\int_{0}^{t}
f\bigl(n^{H}B(s)\bigr)\,ds.
\]
Therefore, it suffices to show
the theorem for the continuous process defined in (\ref{Fn}).
The proof of Theorem \ref{th1} will be done in two steps. We first
show tightness, and then establish the convergence of moments.
Tightness will be deduced from the following result.

\begin{proposition} \label{prop1}
For any $0\leq a<b \leq t$ and any integer $m\geq1$,
\begin{eqnarray*}
&&
\E \bigl[ \bigl(F_{n}(b)-F_{n}(a) \bigr)^{2m}
\bigr] \\
&&\qquad\leq C \biggl((b-a)^{1-Hd} \int_{\R^{2d}}\bigl|f(x)f(y)\bigr|
|y|^{{1}/{H}-d} \,dx\,dy \biggr)^{m},
\end{eqnarray*}
where $C$ is a constant depending only on $H$ and $m$.
\end{proposition}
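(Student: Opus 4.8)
The plan is to expand the $2m$-th moment, represent $f$ through its Fourier transform, integrate out the Gaussian field, and reduce the whole estimate to a frequency-space integral controlled by the local nondeterminism property together with the cancellation coming from $\int f=0$. By the symmetry of the integrand I would first write
$$\E\big[(F_n(b)-F_n(a))^{2m}\big]=n^{(1+Hd)m}(2m)!\int_{\{a<s_1<\cdots<s_{2m}<b\}}\E\Big[\prod_{j=1}^{2m}f(n^HB(s_j))\Big]\,ds.$$
Inserting $f(x)=(2\pi)^{-d}\int_{\R^d}\hat f(\xi)e^{\iota\xi\cdot x}\,d\xi$ and using that $(B(s_1),\dots,B(s_{2m}))$ is Gaussian, the inner expectation becomes
$$(2\pi)^{-2md}\int_{\R^{2md}}\Big(\prod_{j=1}^{2m}\hat f(\xi_j)\Big)\exp\Big(-\tfrac{n^{2H}}{2}\Var\big(\textstyle\sum_{j}\xi_j\cdot B(s_j)\big)\Big)\,d\xi.$$

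Next I would bound the quadratic form from below. Setting $\eta_i=\sum_{j\ge i}\xi_j$ and $s_0:=a$, the local nondeterminism property \eref{modified-lndp}, combined with $s_1^{2H}\ge(s_1-a)^{2H}$ (valid since $a\ge0$), gives $\Var(\sum_j\xi_j\cdot B(s_j))\ge k_H\sum_{i=1}^{2m}|\eta_i|^2 r_i^{2H}$ with $r_i=s_i-s_{i-1}$. In this way every increment $r_i$ lives inside $[a,b]$, which is what will ultimately produce the factor $(b-a)^{m(1-Hd)}$; after the volume-preserving change of variables $\xi\mapsto\eta$ the Gaussian factor decouples across the $\eta_i$. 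For the frequency factors I would use the two elementary bounds $|\hat f(\xi)|\le\int_{\R^d}|f|=:\|f\|_{L^1}$ and, since $\hat f(0)=\int f=0$ and $\beta:=\tfrac1H-d\in(0,1)$ under the assumption $\tfrac1{d+1}<H<\tfrac1d$, $|\hat f(\xi)|=|\hat f(\xi)-\hat f(0)|\le c_\beta|\xi|^\beta$ with $c_\beta=2^{1-\beta}\int_{\R^d}|f(x)||x|^\beta\,dx$; that is, $|\hat f(\xi)|\le\min(\|f\|_{L^1},c_\beta|\xi|^\beta)$. The product of these minima should, after integration, organize into the target constant $\big(\|f\|_{L^1}\int|f(y)||y|^\beta\,dy\big)^m=\big(\int_{\R^{2d}}|f(x)f(y)||y|^{1/H-d}\,dx\,dy\big)^m$.

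The heart of the argument, and the step I expect to be the main obstacle, is the estimation of the resulting $\eta$-integral. The naive choice of applying the cancellation bound $c_\beta|\xi_j|^\beta$ to a fixed half of the indices fails: each decoupled Gaussian integral evaluates to $\int_{\R^d}|\eta|^{\gamma}e^{-cn^{2H}|\eta|^2 r^{2H}}\,d\eta=c\,n^{-H(d+\gamma)}r^{-H(d+\gamma)}$, and since $H(d+\beta)=1$ this pushes the exponent of one increment up to the non-integrable value $1$, making the time-simplex integral diverge. Instead one must keep the full minimum and split each frequency variable into its "small" regime (where $c_\beta|\xi_j|^\beta$ is used, contributing the weighted mass $\int|f||y|^\beta$ and an increment exponent that stays strictly below $1$ precisely because $Hd<1$) and its "large" regime (where $\|f\|_{L^1}$ is used and the cutoff at $|\xi_j|\sim n^H$ supplies the compensating power of $n$). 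Balancing the two regimes should yield a bound of the form $c\,n^{-(1+Hd)m}\prod_i r_i^{-\theta_i}$ with every $\theta_i<1$, so that after multiplying by the prefactor $n^{(1+Hd)m}$ the integrand is $n$-free.

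Finally I would carry out the time integral by scaling: $\int_{\{r_i>0,\ \sum_i r_i<b-a\}}\prod_i r_i^{-\theta_i}\,dr=c\,(b-a)^{2m-\sum_i\theta_i}$, and the arithmetic $\sum_i\theta_i=m(1+Hd)$ gives the exponent $2m-\sum_i\theta_i=m(1-Hd)$, completing the proof. The genuinely delicate part is the bookkeeping in the preceding paragraph: confirming that the two frequency regimes combine to produce exactly the exponent $m(1-Hd)$, that the apparent $r_i^{-1}$ singularities are tamed by the large-frequency cutoffs, and that the constants assemble into the stated $L^1$–weighted product. That is where I expect essentially all of the work to concentrate.
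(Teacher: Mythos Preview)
Your setup through the local nondeterminism bound and the change of variables $\eta_i=\sum_{j\ge i}\xi_j$ matches the paper. The gap is in what you call ``the heart of the argument.''

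The proposed small/large frequency splitting does not do what you claim. The crossover in $\min(\|f\|_{L^1},c_\beta|\xi|^\beta)$ occurs at the fixed scale $|\xi|\sim(\|f\|_{L^1}/c_\beta)^{1/\beta}$, which is independent of $n$; there is no ``cutoff at $|\xi_j|\sim n^H$'' to supply compensating powers of $n$. If you carry your estimate through---decouple via $\min(1,|\eta_j-\eta_{j+1}|^\beta)\le C(\min(1,|\eta_j|^\beta)+\min(1,|\eta_{j+1}|^\beta))$, then integrate each $\eta_i$ against its Gaussian---you obtain, for the indices carrying a $\min$, a one-variable integral bounded by $c(nr_i)^{-1}$ when $nr_i\ge 1$ and $c(nr_i)^{-Hd}$ when $nr_i<1$. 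Integrating this in $r_i$ over $(0,b-a)$ yields
\[
\int_0^{n^{-1}} c\,n^{-Hd}r_i^{-Hd}\,dr_i+\int_{n^{-1}}^{b-a} c\,n^{-1}r_i^{-1}\,dr_i
= c\,n^{-1}\bigl(1+\log(n(b-a))\bigr),
\]
so after reassembling all $2m$ factors and the prefactor $n^{(1+Hd)m}$ the bound grows like $(\log n)^m$. The $r_i^{-1}$ singularity is \emph{not} tamed by your cutoff; it is only truncated at $r_i\sim n^{-1}$, which produces the logarithm.

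The paper avoids this by not passing to $|\hat f|$-bounds at all. It keeps the $y$-integration and the factors $|e^{-\iota y_j\cdot(\eta_j-\eta_{j+1})/n^H}-1|$ explicitly. The key trick is an odd/even pairing: for even $j$ one simply uses the trivial bound $|\,\cdot\,|\le 2$, and for odd $j$ one uses $|e^{\iota(a-b)}-1|\le|e^{\iota a}-1|+|e^{\iota b}-1|$ to separate the $\eta_j$ and $\eta_{j+1}$ dependence. After this the $\eta$-integrals decouple into \emph{independent} pairs, and each pair reduces to the scaling identity (Lemma~\ref{lema1.1})
\[
\int_0^\infty u^{-Hd}\,\E\Big|\exp\Big(\iota\frac{y\cdot X}{n^H u^H}\Big)-1\Big|\,du
= c\,n^{Hd-1}|y|^{\frac1H-d},
\]
which is exact (obtained by the substitution $v=|y|^{-1/H}nu$) and carries no logarithmic loss. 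This is what puts $|y|^{1/H-d}$ directly into the final bound and makes the powers of $n$ cancel cleanly. Your Fourier-side estimate $|\hat f(\xi)|\le c_\beta|\xi|^\beta$ throws away the oscillation that this lemma exploits, and that loss is precisely the $\log n$.
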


\begin{pf} Define
%
\begin{equation}\label{e310}
D= \bigl\{s\in\R^{2m}\dvtx  a<s_{1}<s_2<\cdots
<s_{2m}<b \bigr\}.
\end{equation}

Using the following identity for $f\in H^{{1}/{H}-d}_0$
\[ 
f(x)=\frac{1}{(2\pi)^d}\int_{\R^d}e^{\iota x\cdot\xi}\int
_{\R
^d}e^{-\iota\xi\cdot y} f(y)\,dy\,d\xi,
\]
we then have
%
\begin{eqnarray}\label{i32}
&& \E \bigl[ \bigl(F_{n}(b)-F_{n}(a)
\bigr)^{2m} \bigr]
\nonumber\\
&&\qquad=  (2m)!n^{m(1+Hd)} \E \Biggl(\int_{D}\prod
_{i=1}^{2m} f\bigl(n^{H}B(s_i)
\bigr)\,ds \Biggr)
\nonumber
\\
&&\qquad= \frac{(2m)!}{(2\pi)^{2md}} n^{m(1-Hd)}\nonumber\\
&&\qquad\quad{} \times \int_{\R^{2md}} \int
_{D}\int_{\R^{2md}}\prod
_{i=1}^{2m} f(y_i)
\exp \Biggl(-\frac{1}{2}\Var \Biggl(\sum
^{2m}_{i=1}\xi _i \cdot
B(s_i) \Biggr) \nonumber\\
&&\hspace*{219pt}{}- \iota\sum^{2m}_{i=1}
\frac{y_i\cdot\xi
_i}{n^H} \Biggr)\,d\xi\,ds\,dy
\\
&&\qquad= \frac{(2m)!}{(2\pi)^{2md}} n^{m(1-Hd)}
\nonumber
\\
&&\qquad\quad{} \times\int_{\R^{2md}} \int
_{D}\int_{\R^{2md}} \prod
_{i=1}^{2m} f(y_i)\exp \Biggl(-\frac{1}{2}\Var \Biggl(\sum
^{2m}_{i=1}\xi _i \cdot
B(s_i) \Biggr) \Biggr) \nonumber\\
&&
\hspace*{93pt}\qquad\quad{} \times
\prod^{2m}_{i=1}
\bigl(e^{-\iota
{y_i\cdot\xi_i}/{n^H}}-1 \bigr)\,d\xi\,ds\,dy,
\nonumber
\end{eqnarray}
where in the last equality we used the fact that
\[
\int_{\R^d} f(x)\,dx=0.
\]

By the local nondeterminism property (\ref{modified-lndp}), with the
convention $s_0=0$ and $\eta_{2m+1}=0$, we can write
\begin{eqnarray*}
\hspace*{-4pt}&& \E \bigl[ \bigl(F_{n}(b)-F_{n}(a)
\bigr)^{2m} \bigr]
\\
\hspace*{-4pt}&&\qquad\leq c_1 n^{m(1-Hd)}\\
\hspace*{-4pt}&&\qquad\quad\hspace*{0pt}{}\times \int_{\R^{2md}} \int
_{D} \Biggl|\prod_{i=1}^{2m}
f(y_i) \Biggr| \int_{\R^{2md}} \exp \Biggl(-
\frac{\kappa_H}{2}\sum^{2m}_{i=1} \Biggl|\sum
^{2m}_{j=i}\xi _j\Biggr|^2(s_i-s_{i-1})^{2H}
\Biggr)
\\
\hspace*{-4pt}&&\hspace*{124pt}\qquad\quad{} \times\prod^{2m}_{i=1}
\bigl|e^{-\iota{y_i\cdot
\xi_i}/{n^H}}-1 \bigr|\,d\xi\,ds\,dy
\\
\hspace*{-4pt}&&\qquad= c_1 n^{m(1-Hd)} \\
\hspace*{-4pt}&&\qquad\quad{}\times\int_{\R^{2md}} \int
_{D} \Biggl|\prod_{i=1}^{2m}
f(y_i) \Biggr| \int_{\R^{2md}} \exp \Biggl(-
\frac{\kappa_H}{2}\sum^{2m}_{i=1} |\eta
_i|^2(s_i-s_{i-1})^{2H}
\Biggr)
\\
\hspace*{-4pt}&&\hspace*{124pt}\qquad\quad{} \times\prod^{2m}_{i=1} \biggl|\exp \biggl(
\iota\frac
{y_i}{n^H}\cdot(\eta_{i+1}-\eta_i) \biggr)-1 \biggr|\,d\eta\,ds\,dy,
\end{eqnarray*}
where we made the change of variables
\[
\eta_i=\sum^{2m}_{j=i}\xi_j
\]
for $i= 1,\ldots, 2m$ in the last equality.

Let
\[
x_i=\eta_i (s_i-s_{i-1})^{H}\quad \mbox{and}\quad
u_i=s_i-s_{i-1}
\]
for $i= 1,\ldots, 2m$. Then
\begin{eqnarray*}
&& \E \bigl[ \bigl(F_{n}(b)-F_{n}(a)
\bigr)^{2m} \bigr]
\\
&&\qquad\leq c_1 n^{m(1-Hd)} \\
&&\qquad\quad{}\times\int_{\R^{2md}} \int
_{[a, b]\times
[0,b-a]^{2m-1}} \int_{\R^{2md}}\prod
_{i=1}^{2m} \bigl|f(y_i) \bigr| \Biggl(\prod
_{i=1}^{2m} u^{-Hd}_i
\Biggr)
\\
&&\hspace*{139pt}\qquad\quad{} \times\exp \Biggl(-\frac{\kappa_H}{2}\sum^{2m}_{i=1}
|x_i|^2 \Biggr)\\
&&\hspace*{104pt}\qquad\quad{} \times\prod^{2m}_{i=1}
\biggl|\exp \biggl(\iota\frac{y_i}{n^H}\cdot \biggl(\frac{x_{i+1}}{u^H_{i+1}}-
\frac{x_i}{u^H_i} \biggr) \biggr)-1 \biggr|\,dx \,du\,dy,
\end{eqnarray*}
where $x_{2m+1}=0$ and the integral on $[a, b]\times[0,b-a]^{2m-1}$
means that $u_1\in
[a, b]$ and $u_i\in[0, b-a]$ for $i=2, \ldots, 2m$.

Let
$\sqrt{\kappa_H}X_1, \ldots, \sqrt{\kappa_H}X_{2m}$ be independent
copies of a $d$-dimensional standard normal random vector and
$X_{2m+1}=0$. Then the above inequality can be rewritten as
%
\begin{eqnarray}\label{i33}
&& \E \bigl[ \bigl(F_{n}(b)-F_{n}(a)
\bigr)^{2m} \bigr]
\nonumber\\
&&\quad\le c_2 n^{m(1-Hd)} \nonumber\\[-4pt]\\[-12pt]
&&\qquad{}\times\int_{\R^{2md}} \int
_{[a, b]\times
[0,b-a]^{2m-1} } \Biggl(\prod_{i=1}^{2m}
\bigl| f(y_i) \bigr| u^{-Hd}_i \Biggr)
\nonumber
\\
&&\hspace*{90pt}\qquad{} \times\E \Biggl(\prod^{2m}_{i=1} \biggl|\exp \biggl(\iota\frac{y_i \cdot
X_{i+1}}{n^H u^H_{i+1}}-\iota\frac{y_i \cdot X_i}{n^Hu^H_i} \biggr)-1 \biggr|
\Biggr)\,du\,dy. \nonumber\hspace*{-15pt}
\end{eqnarray}
Notice that
\begin{eqnarray*}
&&
\bigl|e^{(\iota{y_{i}}/{n^H})\cdot
({X_{i+1}}/{u^H_{i+1}}-{X_{i}}/{u^H_{i}} )}-1 \bigr|\\
&&\qquad\leq2\wedge \bigl( \bigl|e^{\iota{y_{i}\cdot
X_{i+1}}/({n^Hu^H_{i+1}})}-1 \bigr| +
\bigl|e^{\iota{y_{i}\cdot
X_{i}}/({n^Hu^H_{i}})}-1 \bigr| \bigr).
\end{eqnarray*}
For each factor in the product inside the expectation in (\ref
{i33}), we choose the upper bound 2 when $i$ is even and the upper bound
\[
\bigl|e^{\iota{y_{i}\cdot X_{i+1}}/({n^Hu^H_{i+1}})}-1 \bigr| + \bigl|e^{\iota{y_{i}\cdot
X_{i}}/({n^Hu^H_{i}})}-1 \bigr|,\
\]
when $i$ is odd. Thus we have
\begin{eqnarray*}
\hspace*{-4pt}&& \E \bigl[ \bigl(F_{n}(b)-F_{n}(a)
\bigr)^{2m} \bigr]
\\
\hspace*{-4pt}&&\qquad\leq c_3 \int_{\R^{2md}} \int_{[a, b]\times[0,b-a]^{2m-1}
}
\E \Biggl[\prod_{i=1}^{m}
\bigl(n^{1-Hd} \bigl|f(y_{2i-1})f(y_{2i}) \bigr|
\bigl(u^{-Hd}_{2i-1}u^{-Hd}_{2i} \bigr)
\\
\hspace*{-4pt}&&\hspace*{34pt}\hspace*{104pt}\qquad\quad{}
\times \bigl( \bigl|e^{\iota{y_{2i-1}\cdot
X_{2i}}/({n^Hu^H_{2i}})}-1 \bigr| \\
\hspace*{-4pt}&&\qquad\quad\hspace*{153pt}{}+ \bigl|e^{\iota{y_{2i-1}\cdot
X_{2i-1}}/({n^Hu^H_{2i-1}})}-1 \bigr| \bigr) \bigr)
\Biggr]\,du\,dy.
\end{eqnarray*}
Since the above random factors are independent, we have
\begin{eqnarray*}
&& \E \bigl[ \bigl(F_{n}(b)-F_{n}(a)
\bigr)^{2m} \bigr]
\\
&&\qquad\leq c_3 \int_{\R^{2md}} \int_{[a, b]\times[0,b-a]^{2m-1}
}
\prod_{i=1}^{m} \bigl(n^{1-Hd}
\bigl|f(y_{2i-1})f(y_{2i}) \bigr| \bigl(u^{-Hd}_{2i-1}u^{-Hd}_{2i}
\bigr)
\\
&&\hspace*{127.5pt}\qquad\quad{}\times\E \bigl( \bigl|e^{\iota{y_{2i-1}\cdot
X_{2i}}/({n^Hu^H_{2i}})}-1 \bigr| \\
&&\qquad\quad\hspace*{151pt}{}+ \bigl|e^{\iota{y_{2i-1}\cdot
X_{2i-1}}/({n^Hu^H_{2i-1}})}-1 \bigr| \bigr) \bigr)\,du\,dy.
\end{eqnarray*}
With the change of variables $x=y_{2i-1}$, $y=y_{2i}$, $u=u_{2i}$ and
$v=u_{2i-1}$, the above inequality can be rewritten as
%
\begin{eqnarray}\label{e311}
\hspace*{-4pt}&& \E \bigl[ \bigl(F_{n}(b)-F_{n}(a)
\bigr)^{2m} \bigr]
\nonumber\\
\hspace*{-4pt}&&\qquad\leq c_3 \int_{\R^{2d}} \int^b_a
\int_0^{b-a} n^{1-Hd} \bigl|f(x)f(y) \bigr|
(uv)^{-Hd}
\nonumber
\\
\hspace*{-4pt}&&\hspace*{71.3pt}\qquad\quad{} \times\E \bigl( \bigl|e^{\iota{y\cdot
X_2}/({n^Hu^H})}-1 \bigr|\nonumber\\[-8pt]\\[-8pt]
&&\qquad\quad\hspace*{95pt}{} + \bigl|e^{\iota{y\cdot
X_1}/({n^Hv^H})}-1 \bigr| \bigr) \,du\,dv\,dx\,dy
\nonumber
\\
\hspace*{-4pt}&&\hspace*{71.3pt}\qquad\quad{} \times\biggl( 2 \int_{\R^{2d}} \int_0^{b-a}
\int_0^{b-a} n^{1-Hd} \bigl|f(x)f(y) \bigr|
(uv)^{-Hd}
\nonumber
\\
\hspace*{-4pt}&&\hspace*{128pt}\qquad\quad{} \times\E \bigl(\bigl|e^{\iota{y\cdot
X_1}/({n^Hu^H})}-1\bigr| \bigr) \,du \,dv \,dx \,dy \biggr)^{m-1}.
\nonumber
\end{eqnarray}
Notice that $\int^b_a u^{-Hd} \,du\leq c_4(b-a)^{1-Hd}$. Then, by Lemma
\ref{lema11}, we obtain
%
\begin{eqnarray}\label{e312}\quad
&&\int_{\R^{2d}} \int^b_a
\int_0^{b-a} n^{1-Hd} \bigl|f(x)f(y) \bigr|
(uv)^{-Hd}
\nonumber\\
&&\hspace*{50pt}\quad{} \times\E \bigl( \bigl|e^{\iota{y\cdot
X_2}/({n^Hu^H})}-1 \bigr| + \bigl|e^{\iota{y\cdot
X_1}/({n^Hv^H})}-1 \bigr| \bigr) \,du\,dv\,dx\,dy
\\
&&\qquad\leq c_5 (b-a)^{1-Hd}\int_{\R^{2d}}
\bigl|f(x)f(y) \bigr| |y|^{
{1}/{H}-d} \,dx\,dy
\nonumber
\end{eqnarray}
and
%
\begin{eqnarray}
\label{e313} &&\int_{\R^{2d}} \int^{b-a}_0
\int_0^{b-a} n^{1-Hd} \bigl|f(x)f(y) \bigr|
(uv)^{-Hd}
\nonumber\\
&&\hspace*{61pt}\quad{} \times\E \bigl( \bigl|e^{\iota{y\cdot
X_1}/({n^Hu^H})}-1 \bigr| \bigr) \,du\,dv\,dx\,dy
\\
&&\qquad\leq c_6 (b-a)^{1-Hd}\int_{\R^{2d}}
\bigl|f(x)f(y) \bigr| |y|^{
{1}/{H}-d} \,dx\,dy.
\nonumber
\end{eqnarray}
Now Proposition \ref{prop1} follows from
(\ref{e311}), (\ref{e312}) and (\ref{e313}).
\end{pf}

Now we prove that the moments of $F_n(t)$ converge to the
corresponding moments of $W(L_t(0))$.

Fix a finite number of disjoint intervals $(a_i, b_i]$ with
$i=1,\ldots,N$ and
$b_i\le a_{i+1}$. Let $\mathbf{m}=(m_1, \ldots, m_N)$ be a fixed
multi-index with $m_i\in\N$ for $i=1,\ldots,N$.
Set $\sum_{i=1}^N m_i=|\mathbf{m}|$ and $\prod_{i=1}^N m_i!=\mathbf{m}!$.
We need to consider the following sequence of random variables:
\[
G_n=\prod_{i=1}^N \bigl(
F_n(b_i)- F_n(a_i)
\bigr)^{m_i}
\]
and compute $\lim_{n\rightarrow\infty} \E(G_n) $. Notice
that the expectation of $G_n$ can be written as
\[
\E(G_n) = \mathbf{m}! n^{{|\mathbf{m}|(1+Hd)}/2} \E \Biggl( \int
_{D_\mathbf{m}} \prod_{i=1}^N
\prod_{j=1}^{m_i}f\bigl(n^{H}B
\bigl(s^i_{j}\bigr)\bigr)\,ds \Biggr),
\]
where
%
\begin{equation}\label{e314}
D_\mathbf{m}= \bigl\{s \in\R^{|\mathbf{m}|}\dvtx  a_{i}<s^i_{1}<
\cdots <s^i_{m_i}<b_{i}, 1\le i\le N \bigr\}.
\end{equation}
Here and in the sequel we denote the coordinates of a point $s\in\R
^{|\mathbf{m}|}$ as
$s= (s^i_j)$, where $ 1\le i \le N$ and $1\le j \le m_i $.

For simplicity of notation, we define
\[
J_0= \bigl\{(i,j)\dvtx  1\leq i\leq N, 1\leq j\leq m_i
\bigr\}.
\]
For any $(i_1, j_1)$ and $(i_2,j_2)\in J_0$, we define the following
dictionary ordering:
\[
(i_1, j_1)\leq(i_2,j_2),
\]
if $i_1<i_2$ or $i_1=i_2$ and $j_1\leq j_2$. For any $(i,j)$ in $J_0$,
under the above ordering, $(i,j)$ is the $(\sum^{i-1}_{k=1}m_k+j)$th
element in $J_0$, and we define
$\#(i,j)=\sum^{i-1}_{k=1}m_k+j$.

\begin{proposition} \label{prop2} Suppose that at least one of the
exponents $m_i$ is odd. Then
\[
\lim_{n\to\infty}\E(G_n)=0.
\]
\end{proposition}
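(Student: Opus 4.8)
The plan is to follow the scheme of Proposition \ref{prop1} closely, the essential new ingredient being a single‑factor estimate that produces genuine decay at the ``unpaired'' points created by the odd exponents. First I would rewrite
$\E(G_n)=\mathbf{m}!\,n^{|\mathbf{m}|(1+Hd)/2}\,\E\big(\int_{D_\mathbf{m}}\prod_{i,j}f(n^HB(s^i_j))\,ds\big)$,
insert the Fourier representation of each factor $f(n^HB(s^i_j))$, carry out the Gaussian expectation in $B$, and use $\int_{\R^d}f=0$ to replace every factor $e^{-\iota y\cdot\xi/n^H}$ by $e^{-\iota y\cdot\xi/n^H}-1$, exactly as in \eref{i.3.2}. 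After rescaling the $|\mathbf{m}|$ frequency variables (each contributing $n^{-Hd}$) the prefactor $n^{|\mathbf{m}|(1+Hd)/2}$ becomes $n^{|\mathbf{m}|(1-Hd)/2}$. Then, ordering all $|\mathbf{m}|$ time points globally and writing $s_k-s_{k-1}=u_k$, I would apply local nondeterminism \eref{modified-lndp}, substitute $\eta_k=\sum_{j\ge k}\xi_j$ and $x_k=\eta_k u_k^{H}$, and replace the $x_k$ by independent Gaussians $X_k$, arriving at a product over the points of factors $|f(y_k)|\,u_k^{-Hd}\,\big|e^{\iota\frac{y_k}{n^H}\cdot(X_{k+1}/u_{k+1}^H-X_k/u_k^H)}-1\big|$, just as before \eref{e.3.1.1}.

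The combinatorial heart is the choice of the bound $|e^{\iota a}-1|\le 2\wedge(|e^{\iota b}-1|+|e^{\iota c}-1|)$ made \emph{block by block}: inside the $i$‑th interval I would use the summand bound at the odd positions and the trivial bound $2$ at the even positions, pairing the points $(1,2),(3,4),\dots$ within that block. When $m_i$ is even all points are paired and, after distributing one power $n^{1-Hd}$ to each pair, each pair is bounded by $c\,(b_i-a_i)^{1-Hd}\int_{\R^{2d}}|f(x)f(y)|\,|y|^{1/H-d}\,dx\,dy$ via the estimate of Lemma \ref{lema1.1} used in Proposition \ref{prop1}. When $m_i$ is odd the same pairing exhausts all but the last point of the block, which is left \emph{unpaired}; for it I keep the within‑block difference $|e^{\iota c}-1|$ (whose scale $u_k$ lives inside the block) and assign the one surplus power $n^{(1-Hd)/2}$ still present in $n^{|\mathbf{m}|(1-Hd)/2}$. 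A quick count shows the powers balance: an odd block uses $\tfrac{m_i-1}{2}$ bounded pairs ($n^{(1-Hd)(m_i-1)/2}$) plus one surplus $n^{(1-Hd)/2}$.

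The decisive estimate is then the single‑factor bound
\[
n^{(1-Hd)/2}\int_{\R^d}\int_0^{\infty}|f(y)|\,u^{-Hd}\,\E\big|e^{\iota y\cdot X/(n^Hu^H)}-1\big|\,du\,dy\ \longrightarrow\ 0,
\]
which I would prove by writing $\E|e^{\iota y\cdot X/(n^Hu^H)}-1|\le c\min(|y|/(n^Hu^H),1)$, splitting the $u$‑integral at $u_*=(|y|/n^H)^{1/H}$, using $Hd<1$ for convergence near $u=0$ and $H(d+1)>1$ for convergence near $u=\infty$ (both guaranteed by the standing hypothesis $\frac1{d+1}<H<\frac1d$). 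The inner integral is then $\le c\,|y|^{1/H-d}n^{Hd-1}$, and integrating against $|f(y)|$ (finite since $f\in H^{1/H-d}_0$) and multiplying by $n^{(1-Hd)/2}$ leaves a factor $c\,n^{(Hd-1)/2}\to0$. Thus every odd block contributes one vanishing factor while every pair stays uniformly bounded, so that whenever some $m_i$ is odd the whole product, hence $\E(G_n)$, tends to $0$.

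The step I expect to be the main obstacle is the boundary bookkeeping at the unpaired point: its second, ``outward'' difference $|e^{\iota b}-1|$ involves the cross‑block gap $u_{k+1}=s^{i+1}_1-s^i_{m_i}$ and the Gaussian $X_{k+1}$ shared with the first point of the next block, so one must argue that this term decouples cleanly. The natural fix is to assign the first point of the following block the trivial bound $2$, so that the shared variable $u_{k+1}$ is integrated out through exactly the same single‑factor integral above (and the shared $X_{k+1}$ disappears after taking the $X$‑expectation, which leaves only the deterministic $\min(\cdot,1)$ bounds). Verifying that this assignment can be carried out simultaneously for all blocks without ever double‑counting a variable $u_k$ or $X_k$, together with the convergence at infinity of the $u$‑integral, is the only delicate part; everything else reduces to the bounded pairing already established in Proposition \ref{prop1}.
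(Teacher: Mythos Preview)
Your setup (Fourier representation, local nondeterminism, the change to independent Gaussians $X_k$) is exactly the paper's, and the ``paired'' estimates are correct. The gap is in the block--boundary bookkeeping, and it is not merely a matter of tidying indices.

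Take the simplest case $N=2$, $m_1=m_2=1$, so $|\mathbf m|=2$ and the prefactor is $n^{1-Hd}$. Both blocks are odd; each has a single unpaired point. If you apply your fix and bound the first point of block~$2$ by $2$, the only remaining factor is $I_{1,1}$, and one application of the single--factor estimate produces $n^{Hd-1}$. Multiplying by the prefactor gives $O(1)$, not $o(1)$: the ``surplus'' $n^{(1-Hd)/2}$ you assigned to the unpaired point of block~$2$ has nothing to act on once that factor has been replaced by $2$. If instead you keep both $I_{1,1}$ and $I_{2,1}$, they are \emph{not} independent: both involve the same Gaussian $X^2_1$ and the same gap variable $u^2_1=s^2_1-s^1_1$. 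The product of the two $|e^{\iota\cdot}-1|$ factors that share $X^2_1$ cannot be split into a product of expectations, and a single use of Lemma~\ref{lema1.1} gives only one power $n^{Hd-1}$, again insufficient. More generally, your fix forces a parity shift in the following block whose effect cascades and can leave you one power of $n^{Hd-1}$ short whenever two odd blocks are adjacent.

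The paper's proof avoids this by two ideas you are missing. First, it does not treat all odd blocks; it singles out only the \emph{first} odd exponent $m_\ell$ and keeps the adjacent pair $I_{\ell,m_\ell}I_{\ell+1,1}$ intact, while selecting the other retained factors by \emph{global} parity (odd global index before the pair, even global index after) so that all other factors are mutually independent and independent of this pair. Second---and this is the crucial point---for the genuinely dependent piece of $\E(I_{\ell,m_\ell}I_{\ell+1,1})$, where both factors involve $X^{\ell+1}_1$, it uses $|e^{\iota z}-1|\le c|z|^\beta$ on each factor to get a bound $c\,n^{-2H\beta}(s^{\ell+1}_1-s^\ell_{m_\ell})^{-2H\beta}$. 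The integral of this against $(s^{\ell+1}_1-s^\ell_{m_\ell})^{-Hd}$ would diverge at $0$ for $2H\beta>1-Hd$, but here $s^{\ell+1}_1-s^\ell_{m_\ell}\ge a_{\ell+1}-b_\ell>0$ is bounded away from zero, so the integral is finite for any $\beta$ with $2H\beta\le 2-2Hd$, in particular for $\beta=\tfrac1H-d$, yielding $n^{2(Hd-1)}$ from the dependent pair alone. Your proposal never uses this positive lower bound on the cross--block gap, and without it you cannot extract two full powers of $n^{Hd-1}$ from the two boundary factors.
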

\begin{pf} The proof will be done in several steps.

\textit{Step} 1.
Using a similar argument to that in (\ref{i32}), we obtain
\begin{eqnarray*}
\E(G_n)
&=& \frac{\mathbf{m}!}{(2\pi)^{|\mathbf{m}|d}} n^{{|\mathbf
{m}|(1-Hd)}/{2}}\\[-2pt]
&&{}\times \int_{\R^{|\mathbf{m}|d}} \int
_{D_\mathbf{m}} \int_{\R^{|\mathbf{m}|d}} \Biggl( \prod
^{N}_{i=1}\prod^{m_i}_{j=1}
f\bigl(y^i_j\bigr) \Biggr)
\\[-2pt]
&&\hspace*{82.2pt}{} \times\exp \Biggl(-\frac{1}{2}\Var \Biggl( \sum
^{N}_{i=1}\sum^{m_i}_{j=1}
\xi^i_j\cdot B\bigl(s^i_j\bigr)
\Biggr) \\[-2pt]
&&\qquad\quad\hspace*{127pt}{}- \iota\sum^{N}_{i=1}\sum
^{m_i}_{j=1} \frac{y^i_j\cdot\xi^i_j}{n^H} \Biggr) \,d\xi\,ds\,dy
\\[-6pt]
&=& \frac{\mathbf{m}!}{(2\pi)^{|\mathbf{m}|d}} n^{{|\mathbf
{m}|(1-Hd)}/{2}} \\[-2pt]
&&\hspace*{0pt}{}\times\int_{\R^{|\mathbf{m}|d}} \int
_{D_\mathbf{m}} \int_{\R^{|\mathbf{m}|d}} \Biggl( \prod
^{N}_{i=1}\prod^{m_i}_{j=1}
f\bigl(y^i_j\bigr) \Biggr)
\\[-2pt]
&&\hspace*{84pt}{} \times\exp \Biggl(-\frac{1}{2}\Var \Biggl( \sum
^{N}_{i=1}\sum^{m_i}_{j=1}
\xi^i_j\cdot B\bigl(s^i_j\bigr)
\Biggr) \Biggr) \\[-2pt]
&&\hspace*{84pt}{} \times\prod^{N}_{i=1}\prod
^{m_i}_{j=1} \bigl(e^{ -\iota{y^i_j\cdot
\xi^i_j}/{n^H}}-1 \bigr)
\,d\xi\,ds\,dy,
\end{eqnarray*}
where we used the fact $\int_{\R^d} f(x)\,dx=0$ in the last
equality.\vadjust{\goodbreak}

By the local nondeterminism property (\ref{modified-lndp}), with the
convention $s^i_0=s^{i-1}_{m_{i-1}}$
for $2\leq i \leq N$ and $s^1_0=0$,
\[
\Var \Biggl( \sum^{N}_{i=1}\sum
^{m_i}_{j=1} \xi^i_j \cdot B
\bigl(s^i_j\bigr) \Biggr) \geq\kappa_H \sum
^{N}_{i=1}\sum
^{m_i}_{j=1} \biggl|\sum_{(l,k)\geq
(i,j)}
\xi^{l}_k \biggr|^2\bigl(s^i_j-s^i_{j-1}
\bigr)^{2H}.
\]

Let $F(y)=\prod^{N}_{i=1}\prod^{m_i}_{j=1} f(y^i_j)$, and make the
change of variables
\[
\eta^i_j=\sum_{(\ell,k)\geq(i,j)}\xi^{\ell}_k
\]
for $1\leq i \leq N$ and $1\leq j\leq m_i$. Then we can estimate
$E(G_n)$ as follows:
\begin{eqnarray*}
\bigl|\E(G_n)\bigr|
&\leq& c_1 n^{{|\mathbf{m}|(1-Hd)}/{2}} \\[-2pt]
&&{}\times\int_{\R^{|\mathbf{m}|d}} \int
_{D_\mathbf{m}} \int_{\R^{|\mathbf{m}|d}} \bigl|F(y)\bigr|\exp \Biggl(-
\frac{\kappa_H}{2}\sum^{N}_{i=1}\sum
^{m_i}_{j=1} \bigl|\eta^i_j\bigr|^2
\bigl(s^i_j-s^i_{j-1}
\bigr)^{2H} \Biggr)
\\[-2pt]
&&\hspace*{64.2pt}{} \times \Biggl( \prod^{N}_{i=1}\prod
^{m_i}_{j=1} \biggl|\exp \biggl( - \iota
\frac{y^i_j}{n^H}\cdot\bigl(\eta^i_j-
\eta^i_{j+1}\bigr) \biggr)-1 \biggr| \Biggr)\,d\eta\,ds\,dy.
\end{eqnarray*}
Making the change of variable $\eta^i_j=(s^i_j -s^i_{j-1})^H \xi^i_j$ yields
\begin{eqnarray*}
\hspace*{-4pt}&&\bigl|\E(G_n)\bigr|\\[-2pt]
\hspace*{-4pt}&&\qquad\leq c_1 n^{{|\mathbf{m}|(1-Hd)}/{2}} \\[-2pt]
\hspace*{-4pt}&&\qquad\quad{}\times\int_{\R^{|\mathbf{m}|d}} \int
_{D_\mathbf{m}} \int_{\R^{|\mathbf
{m}|d}} \bigl|F(y)\bigr|
\\[-2pt]
\hspace*{-4pt}&&\qquad\quad\hspace*{84pt}{} \times \Biggl(\prod^{N}_{i=1}\prod
^{m_i}_{j=1} \bigl(s^i_j-s^i_{j-1}
\bigr)^{-Hd} \Biggr) \exp \Biggl(-\frac{\kappa_H}{2}\sum
^{N}_{i=1}\sum^{m_i}_{j=1}
\bigl|\xi ^i_j\bigr|^2 \Biggr)
\\[-2pt]
\hspace*{-4pt}&&\hspace*{84pt}\qquad\quad{} \times \Biggl( \prod^{N}_{i=1}\prod
^{m_i}_{j=1} \biggl|\exp \biggl(\iota
\frac
{y^i_j\cdot\xi^i_{j+1}}{n^H
(s^i_{j+1}-s^i_{j})^H} \\[-2pt]
\hspace*{-4pt}&&\hspace*{158.4pt}\qquad\quad{}- \iota\frac{y^i_j \cdot\xi^i_j}{n^H
(s^i_{j}-s^i_{j-1})^H} \biggr)-1 \biggr| \Biggr)\,d\xi\,ds\,dy
\end{eqnarray*}
with the convention $\xi^N_{m_N+1}=0$, $\xi^{i}_{m_i+1}=\xi
^{i+1}_{1}$ for $1\leq i\leq N-1$. For the probabilistic argument to be
used below, it is convenient to express the integral with respect to
$d\xi$ as an expectation. In this way we can write
%
\begin{eqnarray}\label{e315}
&& \bigl|\E(G_n)\bigr|
\nonumber\\
&&\qquad \leq c_2 n^{{|\mathbf{m}|(1-Hd)}/{2}}\nonumber\\
&&\qquad\quad\hspace*{0pt}{}\times \int_{\R^{|\mathbf{m}|d}} \int
_{D_\mathbf{m}} \bigl|F(y)\bigr| \Biggl(\prod^{N}_{i=1}
\prod^{m_i}_{j=1} \bigl(s^i_j-s^i_{j-1}
\bigr)^{-Hd} \Biggr)
\\
&&\hspace*{56.8pt}\qquad\quad{} \times\E \Biggl( \prod^{N}_{i=1}\prod
^{m_i}_{j=1} \biggl|\exp \biggl(\iota
\frac{y^i_j\cdot X^i_{j+1}}{n^H
(s^i_{j+1}-s^i_{j})^H} \nonumber\\
&&\hspace*{171pt}{}- \iota\frac{y^i_j \cdot X^i_j}{n^H
(s^i_{j}-s^i_{j-1})^H} \biggr)-1 \biggr| \Biggr)\,ds\,dy,
\nonumber
\end{eqnarray}
where $\sqrt{\kappa_H}X^i_j$ ($1\leq i\leq N$, $1\leq j\leq m_i$) are
independent copies of a $d$-dimensional standard normal random vector,
and as before, we use the convention $X^N_{m_N+1}=0$,
$X^{i}_{m_i+1}=X^{i+1}_{1}$ for $1\leq i\leq N-1$.

Denote the expectation in (\ref{e315}) by $I$. That is,
\begin{eqnarray*}
I &=& \E \Biggl( \prod^{N}_{i=1}\prod
^{m_i}_{j=1} \biggl|\exp \biggl(\iota
\frac{y^i_j\cdot X^i_{j+1}}{n^H
(s^i_{j+1}-s^i_{j})^H} - \iota\frac{y^i_j \cdot X^i_j}{n^H
(s^i_{j}-s^i_{j-1})^H} \biggr)-1 \biggr| \Biggr)
\\
&=& \E \biggl( \prod_{(i,j)\in J_0} I_{i,j}
\biggr),
\end{eqnarray*}
where
\[
I_{i,j}= \biggl|\exp \biggl(\iota\frac{y^i_j\cdot X^i_{j+1}}{n^H
(s^i_{j+1}-s^i_{j})^H} - \iota
\frac{y^i_j \cdot X^i_j}{n^H
(s^i_{j}-s^i_{j-1})^H} \biggr)-1 \biggr|
\]
for $1\leq i\leq N$ and $1\leq j\leq m_i$.

Notice that the random variables $I_{i,j}$ for $(i,j)\in J_0$ are
dependent. We are going to choose a proper subset of $J_0$ in the
following way. Assume that $m_{\ell}$ is the first odd exponent. Then
we choose all the factors $I_{i,j}$ such that
$ \#(i,j)<\#(\ell,m_{\ell})$ and $\#(i,j)$ is odd. Then we choose all
the factors $I_{i,j}$ such that
$ \#(i,j)>\#(\ell,m_{\ell})+1$, and $\#(i,j)$ is even. Notice that
all these factors are mutually independent, and they are also
independent of the product $I_{\ell,m_{\ell}} I_{\ell+1,1}$. The
lack of independence of the two factors $I_{\ell,m_{\ell}} $ and
$I_{\ell+1,1}$ will be compensated by the fact that the integral of
$(s_1^{\ell+1} -s^\ell_{m_\ell})^{-\beta}$ is finite for any $\beta
<2$ because we have the constraint $s^\ell_{m_\ell}< b_\ell<
s_1^{\ell+1}$.
To make this argument more precise, let us define
\[
J_{\ell}=J_{\ell,1}\cup J_{\ell,2},
\]
where
\[
J_{\ell,1}=\bigl\{(i,j)\in J_0\dvtx  \#(i,j)<\#(
\ell,m_{\ell}) \mbox{ and } \#(i,j) \mbox{ odd}\bigr\}
\]
and
\[
J_{\ell,2}=\bigl\{(i,j)\in J_0\dvtx  \#(i,j)>\#(
\ell,m_{\ell})+1 \mbox{ and } \#(i,j) \mbox{ even}\bigr\}.
\]

Notice that $I_{i,j}\leq2$ for all $(i,j)\in J_0$. Then
\[
I \leq \cases{\displaystyle  c_2 \E \biggl(I_{\ell,m_{\ell}} I_{\ell+1,1}
\prod_{(i,j)\in J_{\ell}} I_{i,j} \biggr), &\quad if $\ell\neq
N$,
\vspace*{2pt}\cr
\displaystyle c_2 \E \biggl(I_{\ell,m_{\ell}} \prod
_{(i,j)\in J_{\ell}} I_{i,j} \biggr), &\quad if $\ell=N$.}
\]

\textit{Step} 2. We first consider the case $\ell\neq N$. In
this case, the number of elements in $J_\ell$ is $[\frac{|\mathbf
{m}|}{2}]-1$ and
\begin{eqnarray*}
\bigl|\E(G_n)\bigr| & \leq & c_3 n^{{|\mathbf{m}|(1-Hd)}/{2}} \int
_{\R^{|\mathbf{m}|d}} \int_{D_\mathbf{m}} \bigl|F(y)\bigr| \biggl(\prod
_{(i,j)\in J_0} \bigl(s^i_j-s^i_{j-1}
\bigr)^{-Hd} \biggr)
\\
&&\hspace*{110.5pt}{} \times\E (I_{\ell, m_{\ell}} I_{\ell+1,1} )\prod
_{(i,j)\in J_{\ell}} \E(I_{i, j})\,ds\,dy.
\end{eqnarray*}
In the last inequality, we used the fact that all random variables
$I_{\ell, m_{\ell}} I_{\ell+1,1}$ and $I_{i, j}$ for $(i,j)\in
J_{\ell}$ are independent.

Since $|e^{\iota(z_1-z_2)}-1|\leq|e^{\iota z_1}-1|+|e^{\iota z_2}-1|$
for all $z_1,z_2\in\R$,
\begin{eqnarray*}
\E (I_{\ell, m_{\ell}} I_{\ell+1,1} )&\leq& \E \bigl\{ \bigl(\bigl|e^{\iota{y^\ell_{m_\ell}\cdot
X^{\ell+1}_1}/({n^H (s^{\ell+1}_1-s^\ell_{m_\ell})^H})}
-1\bigr|\\
&&\hspace*{14.4pt}{}+\bigl|e^{ \iota
{y^\ell_{m_\ell} \cdot X^\ell_{m_\ell}}/({n^H
(s^\ell_{m_\ell}-s^\ell_{m_{\ell}-1})^H})}-1\bigr| \bigr)
\\
&&\hspace*{9pt}{} \times \bigl(\bigl|e^{\iota{y^{\ell+1}_1\cdot X^{\ell+1}_{2}}/({n^H(s^{\ell
+1}_{2}-s^{\ell+1}_{1})^H})}-1\bigr|\\
&&\hspace*{58.3pt}\hspace*{-34.4pt}{}+\bigl|e^{\iota{y^{\ell+1}_{1} \cdot
X^{\ell+1}_{1}}/({n^H
(s^{\ell+1}_{1}-s^\ell_{m_\ell})^H})}-1\bigr| \bigr) \bigr\}.
\end{eqnarray*}
Notice that $X^{\ell+1}_{2}, X^{\ell+1}_1$ and $X^\ell_{m_\ell}$
are independent. As a consequence, we can write
\[
\E (I_{\ell, m_{\ell}} I_{\ell+1,1} )\leq I^{\ell
}_1+I^{\ell}_2,
\]
where
\begin{eqnarray*}
I^{\ell}_1 &=& \E\bigl|e^{\iota{y^\ell_{m_\ell}\cdot X^{\ell
+1}_1}/({n^H (s^{\ell+1}_1-s^\ell_{m_\ell})^H})} -1\bigr| \E\bigl|e^{\iota
{y^{\ell+1}_1\cdot X^{\ell+1}_{2}}/({n^H(s^{\ell+1}_{2}-s^{\ell
+1}_{1})^H})}-1\bigr|
\\
&&{} + \E\bigl|e^{ \iota{y^\ell_{m_\ell} \cdot X^\ell_{m_\ell}}/({n^H
(s^\ell_{m_\ell}-s^\ell_{m_{\ell}-1})^H})}-1\bigr| \E\bigl|e^{\iota
{y^{\ell+1}_1\cdot X^{\ell+1}_{2}}/({n^H(s^{\ell+1}_{2}-s^{\ell
+1}_{1})^H})}-1\bigr|
\\
&&{} + \E\bigl|e^{ \iota{y^\ell_{m_\ell} \cdot X^\ell_{m_\ell}}/({n^H
(s^\ell_{m_\ell}-s^\ell_{m_{\ell}-1})^H})}-1\bigr| \E\bigl|e^{\iota
{y^{\ell+1}_{1} \cdot X^{\ell+1}_{1}}/({n^H
(s^{\ell+1}_{1}-s^\ell_{m_\ell})^H})}-1\bigr|
\end{eqnarray*}
and
%
\begin{equation}\label{e320a}
I^\ell_2=\E \bigl(\bigl|e^{\iota{y^\ell_{m_\ell}\cdot X^{\ell
+1}_1}/({n^H (s^{\ell+1}_1-s^\ell_{m_\ell})^H})} -1\bigr|\bigl|e^{\iota
{y^{\ell+1}_{1} \cdot X^{\ell+1}_{1}}/({n^H
(s^{\ell+1}_{1}-s^\ell_{m_\ell})^H})}-1\bigr|
\bigr).\hspace*{-35pt}
\end{equation}

Therefore,
%
\begin{equation}
\label{e321a} \bigl|\E(G_n)\bigr| \leq c_3 n^{{|\mathbf{m}|(1-Hd)}/{2}}\int
_{\R^{|\mathbf{m}|d}} \bigl|F(y)\bigr| (G_{1,n}+G_{2,n})\,dy,
\end{equation}
where
\[
G_{k,n}= \int_{D_\mathbf{m}} \biggl(\prod
_{(i,j)\in J_0} \bigl(s^i_j-s^i_{j-1}
\bigr)^{-Hd} \biggr) I^{\ell}_k \prod
_{(i,j)\in J_{\ell}} \E(I_{i, j})\,ds
\]
for $k=1,2$.

We claim that
\[
G_{1,n} \leq c_4 n^{([{|\mathbf{m}|}/{2}]+1)(Hd-1)} \bigl|y^{\ell
+1}_{1}\bigr|^{{1}/{H}-d}\bigl|y^\ell_{m_\ell}\bigr|^{{1}/{H}-d}
\prod_{(i,j)\in J_{\ell}} \bigl|y^i_j\bigr|^{{1}/{H}-d}.
\]
In fact, making the change of variables $v^i_j=s^i_j-s^i_{j-1}$ for all
$(i,j)\in J_0$, and defining
\[
a_{i,j}= \cases{\displaystyle  \bigl(v^i_j
v^i_{j+1}\bigr)^{-Hd} \E(I_{i, j}), &\quad if
$(i,j)\in J_{\ell} \mbox{ and } (i,j)\neq (N,m_N)$;
\vspace*{2pt}\cr
\displaystyle \bigl(v^N_{m_N}\bigr)^{-Hd} \E(I_{N, m_N}),
&\quad if $(i,j)\in J_{\ell} \mbox{ and } (i,j)=(N,m_N)$,}
\]
and $a^{\ell}_1=(v^{\ell+1}_{2}v^{\ell+1}_{1}v^{\ell}_{m_\ell
})^{-Hd}  I^\ell_1$, we obtain
%
\begin{eqnarray}
\label{e322a} G_{1,n} &\leq& \int_{[0,b_N]^{|\mathbf{m}|}}
a^{\ell}_1 \prod_{(i,j)\in
J_{\ell}}
a_{i, j} \,dv
\nonumber\\
&=& \int_{[0,b_N]^3} a^{\ell}_1
\,dv^{\ell+1}_{2} \,dv^{\ell+1}_{1}
\,dv^{\ell
}_{m_\ell} \prod_{(i,j)\in J_{\ell}} \int
_{[0,b_N]^2} a_{i, j} \,dv^i_j
\,dv^i_{j+1}
\\
&\leq& c_5 n^{([{|\mathbf{m}|}/{2}]+1)(Hd-1)}
\bigl|y^{\ell+1}_{1}\bigr|^{{1}/{H}-d}\bigl|y^\ell_{m_\ell}\bigr|^{{1}/{H}-d}
\prod_{(i,j)\in J_{\ell}} \bigl|y^i_j\bigr|^{{1}/{H}-d}.
\nonumber
\end{eqnarray}
Here we used Lemma \ref{lema11} in the last inequality $[\frac
{|\mathbf{m}|}{2}]+1$ times.

For any $\beta\in[0,1]$, we have $|e^{\iota z}-1|\leq c_{\beta}
|z|^{\beta}$ for all $z\in\R$. Recall the definition of $I^\ell_2$
in (\ref{e320a}). We then have
\begin{eqnarray*}
I^\ell_2 &\leq& c_6 n^{-2H\beta}\bigl|y^\ell_{m_\ell}\bigr|^{\beta}\bigl|y^{\ell
+1}_{1}\bigr|^{\beta}
\bigl(s^{\ell+1}_1-s^\ell_{m_\ell}
\bigr)^{-2H\beta} \E \bigl|X^{\ell+1}_1\bigr|^{2\beta}
\\
&\leq& c_7 n^{-2H\beta}\bigl|y^\ell_{m_\ell}\bigr|^{\beta}\bigl|y^{\ell
+1}_{1}\bigr|^{\beta}
\bigl(s^{\ell+1}_1-s^\ell_{m_\ell}
\bigr)^{-2H\beta}.
\end{eqnarray*}
So
\begin{eqnarray*}
G_{2, n} &\leq& c_7 n^{-2H\beta}\bigl|y^\ell_{m_\ell}\bigr|^{\beta}\bigl|y^{\ell
+1}_{1}\bigr|^{\beta}
\\
&&{} \times\int_{D_\mathbf{m}} \bigl(s^{\ell+1}_1-s^\ell_{m_\ell
}
\bigr)^{-2H\beta} \biggl(\prod_{(i,j)\in J_0}
\bigl(s^i_j-s^i_{j-1}
\bigr)^{-Hd} \biggr) I^{\ell
}_2 \prod
_{(i,j)\in J_{\ell}} \E(I_{i, j}) \,ds.
\end{eqnarray*}

Define $J_{\ell,3}=\{(i,j)\in J_0\dvtx  \#(i,j)<\#(\ell,m_\ell)\}$ and
\[
D^\ell_\mathbf{m}= \bigl\{ a_{i}<s^i_{1}<
\cdots<s^i_{m_i}<b_{i}, 1\le i\le\ell-1;
a_{\ell}<s^{\ell}_{1}<\cdots<s^\ell_{m_\ell-1}<b_{\ell}
\bigr\}.
\]

Integrating the above integral with respect to $s^i_j$ for $(i,j)\in
J_{\ell,2}$ and using Lemma \ref{lema11},
\begin{eqnarray*}
G_{2,n} &\leq& c_8 n^{-2H\beta} n^{\# J_{\ell, 2}(Hd-1)}
\bigl|y^\ell _{m_\ell}\bigr|^{\beta}\bigl|y^{\ell+1}_{1}\bigr|^{\beta}
\prod_{(i,j)\in J_{\ell,2}}\bigl|y^i_j\bigr|^{{1}/{H}-d}
\\
&&{} \times\int_{D^\ell_\mathbf{m}} I^{\ell}_3 \prod
_{(i,j)\in J_{\ell,3}} \bigl(s^i_j-s^i_{j-1}
\bigr)^{-Hd} \prod_{(i,j)\in
J_{\ell,1}}
\E(I_{i, j})\,ds,
\end{eqnarray*}
where $\# J_{\ell, 2}$ is the cardinality of $J_{\ell, 2}$ and
\begin{eqnarray*}
I^{\ell}_3 &=& \int^{b_{l}}_{s^{\ell}_{m_\ell-1}}
\int^{b_{l+1}}_{a_{\ell+1}} \int^{b_{l+1}}_{s^{\ell+1}_1}
\bigl(s^{\ell
+1}_{2}-s^{\ell+1}_{1}
\bigr)^{-Hd} \bigl(s^{\ell+1}_1-s^\ell_{m_\ell
}
\bigr)^{-Hd-2H\beta}
\\
&&\hspace*{79pt}{} \times\bigl(s^\ell_{m_\ell}-s^\ell_{m_{\ell}-1}
\bigr)^{-Hd} \,ds^{\ell+1}_{2} \,ds^{\ell+1}_1
\,ds^\ell_{m_\ell}.
\end{eqnarray*}
We observe that if $1-Hd<2H\beta\leq2-2Hd$,
\begin{eqnarray*}
I^{\ell}_3 & \leq & c_9 \int
^{b_{l}}_{s^{\ell}_{m_\ell-1}} \int^{b_{l+1}}_{a_{\ell+1}}
\bigl(s^{\ell+1}_1-s^\ell_{m_\ell}
\bigr)^{-Hd-2H\beta
} \bigl(s^\ell_{m_\ell}-s^\ell_{m_{\ell}-1}
\bigr)^{-Hd} \,ds^{\ell+1}_1 \,ds^\ell_{m_\ell}
\\
& \leq & c_{10} \int^{b_{l}}_{s^{\ell}_{m_\ell-1}}
\bigl(a_{\ell+1}-s^\ell _{m_\ell}\bigr)^{1-Hd-2H\beta}
\bigl(s^\ell_{m_\ell}-s^\ell_{m_{\ell
}-1}
\bigr)^{-Hd} \,ds^\ell_{m_\ell}
\\
& \leq & c_{11} (a_{\ell+1}-a_\ell)^{2-2Hd-2H\beta}.
\end{eqnarray*}
As a consequence,
%
\begin{eqnarray}\label{e323}
G_{2,n} &\leq& c_{12} n^{-2H\beta} n^{\# J_{\ell, 2}(Hd-1)}
\bigl|y^\ell _{m_\ell}\bigr|^{\beta}\bigl|y^{\ell+1}_{1}\bigr|^{\beta}
\prod_{(i,j)\in J_{\ell,2}}\bigl|y^i_j\bigr|^{{1}/{H}-d}
\nonumber\\
&&{} \times\int_{D^{\ell}_\mathbf{m}} \prod_{(i,j)\in J_{\ell,3}}
\bigl(s^i_j-s^i_{j-1}
\bigr)^{-Hd} \prod_{(i,j)\in
J_{\ell,1}}
\E(I_{i, j})\,ds
\nonumber\\[-8pt]\\[-8pt]
&\leq& c_{13} n^{-2H\beta} n^{\# J_{\ell}(Hd-1)} \bigl|y^\ell
_{m_\ell}\bigr|^{\beta}\bigl|y^{\ell+1}_{1}\bigr|^{\beta}
\prod_{(i,j)\in J_{\ell
}}\bigl|y^i_j\bigr|^{{1}/{H}-d}
\nonumber
\\
& = & c_{13} n^{-2H\beta} n^{([{|\mathbf{m}|}/{2}]-1)(Hd-1)} \bigl|y^\ell_{m_\ell}\bigr|^{\beta}\bigl|y^{\ell+1}_{1}\bigr|^{\beta}
\prod_{(i,j)\in
J_{\ell}}\bigl|y^i_j\bigr|^{{1}/{H}-d}.
\nonumber
\end{eqnarray}

Choosing $\beta=\frac{1}{H}-d$ in (\ref{e323}),
%
\begin{eqnarray}\label{e324}\qquad
G_{2,n} &\leq& c_{13} n^{([{|\mathbf{m}|}/{2}]+1)(Hd-1)}
\bigl|y^\ell_{m_\ell}\bigr|^{{1}/{H}-d}\bigl|y^{\ell+1}_{1}\bigr|^{{1}/{H}-d}\nonumber\\[-8pt]\\[-8pt]
&&\hspace*{0pt}\times
\prod_{(i,j)\in J_{\ell}}\bigl|y^i_j\bigr|^{{1}/{H}-d}.\nonumber
\end{eqnarray}

Substituting (\ref{e322a}) and (\ref{e324}) into (\ref
{e321a}) yields
%
\begin{eqnarray}
\label{e325}\qquad
\bigl|\E(G_n)\bigr|
&\leq& c_{14} n^{-({1-Hd})/{2}}\nonumber\\
&&{}\times\int_{\R^{|\mathbf{m}|d}} \bigl|F(y)\bigr|
\bigl|y^\ell_{m_\ell}\bigr|^{{1}/{H}-d}
\bigl|y^{\ell+1}_{1}\bigr|^{{1}/{H}-d}\\
&&\hspace*{36.5pt}{}\times \prod_{(i,j)\in
J_{\ell}}\bigl|y^i_j\bigr|^{{1}/{H}-d} \,dy. \nonumber
\end{eqnarray}

\textit{Step} 3. Now we consider the case $\ell=N$. In this
case, $J_{\ell}=J_{\ell,1}$ and
\begin{eqnarray*}
\bigl|\E(G_n)\bigr| & \leq & c_{15} n^{{|\mathbf{m}|(1-Hd)}/{2}} \\
&&{}\times\int
_{\R^{|\mathbf{m}|d}} \int_{D_\mathbf{m}} \bigl|F(y)\bigr| \prod
_{(i,j)\in J_0} \bigl(s^i_j-s^i_{j-1}
\bigr)^{-Hd}
\\
&&\hspace*{56.7pt}{} \times\E(I_{N, m_N} )\prod_{(i,j)\in J_{N,1}}
\E(I_{i,
j}) \,ds \,dy.
\end{eqnarray*}
Define $J_{N,3}=\{(i,j)\in J_0\dvtx  \#(i,j)<\#(N,m_N)\}$ and
\[
D^N_\mathbf{m}= \bigl\{ a_{i}<s^i_{1}<
\cdots<s^i_{m_i}<b_{i}, 1\le i\le N-1;
a_{\ell
}<s^{N}_{1}<\cdots<s^N_{m_N-1}<b_{N}
\bigr\}.
\]
Integrating the above integral with respect to $s^N_{m_N}$ and using
Lemma \ref{lema11} yield
\begin{eqnarray*}
\bigl|\E(G_n)\bigr| & \leq & c_{16} n^{{(|\mathbf{m}|-2)(1-Hd)}/{2}}\\
&&{}\times \int
_{\R^{|\mathbf{m}|d}} \int_{D^N_\mathbf{m}} \bigl|F(y)\bigr|
\bigl|y^N_{m_N}\bigr|^{{1}/{H}-d}
\\
&&\hspace*{57.2pt}{} \times\prod_{(i,j)\in J_{N,3}} \bigl(s^i_j-s^i_{j-1}
\bigr)^{-Hd} \prod_{(i,j)\in J_{N,1}}
\E(I_{i, j}) \,ds \,dy.
\end{eqnarray*}
Using arguments similar to those in step 2,
%
\begin{eqnarray}
\label{e326} \bigl|\E(G_n)\bigr| &\leq& c_{17}
n^{-({1-Hd})/{2}}\nonumber\\[-8pt]\\[-8pt]
&&{}\times
\int_{\R^{|\mathbf{m}|d}} \bigl|F(y)\bigr| \bigl|y^N_{m_N}\bigr|^{{1}/{H}-d}
\prod_{(i,j)\in J_{N,1}} \bigl|y^i_j\bigr|^{{1}/{H}-d}
\,dy.\nonumber
\end{eqnarray}

\textit{Step} 4. Recall that $f\in H^{{1}/{H}-d}_0$. Then
from (\ref{e325}) and (\ref{e326}), we see that $|\E(G_n)|$ is
bounded by a multiple of $n^{-({1-Hd})/{2}}$. Our result now follows
from taking the limit.
\end{pf}

In the sequel, we consider the convergence of moments when all
exponents $m_i$ are even.
Recall the definition of $D_\mathbf{m}$ in (\ref{e314}). For
$1\leq\ell\leq N$ and $1\leq k\leq\frac{m_{\ell}}{2}$, we define
\[
O^\ell_k=D_\mathbf{m}\cap \biggl\{
\frac{s^\ell_{2k}-s^\ell
_{2k-2}}{2}<s^\ell_{2k}-s^\ell_{2k-1}
\biggr\}.
\]
The following result tells us that the integrals over the domain
$O^\ell_k$ do not contribute to the limit of the moments. This result
will play a fundamental role in computing the limits of even moments.

\begin{proposition} \label{prop3}
For any $1\leq\ell\leq N$ and $1\le k\le\frac{m_{\ell}}{2}$,
\[
\lim_{n\to\infty} n^{{|\mathbf{m}|(1+Hd)}/{2}} \E \Biggl(\int
_{O^\ell_k}\prod^N_{i=1}
\prod_{j=1}^{m_i} f\bigl(n^{H}B
\bigl(s^i_j\bigr)\bigr) \,ds \Biggr)=0.
\]
\end{proposition}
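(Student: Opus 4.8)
The plan is to repeat verbatim the reduction carried out in the proof of Proposition \ref{prop2}. First I would use the Fourier inversion $f(x)=(2\pi)^{-d}\int_{\R^d}e^{\iota x\cdot\xi}\int_{\R^d}e^{-\iota\xi\cdot y}f(y)\,dy\,d\xi$, invoke $\int_{\R^d}f=0$ to turn each exponential into a factor $e^{-\iota y^i_j\cdot\xi^i_j/n^H}-1$, bound the Gaussian density from below by the local nondeterminism estimate \eref{modified-lndp}, and perform the substitutions $\eta^i_j=\sum_{(l,k)\ge(i,j)}\xi^l_k$ and $x^i_j=\eta^i_j(s^i_j-s^i_{j-1})^H$ exactly as in \eref{e.3.1.5}. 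Writing $v^i_j=s^i_j-s^i_{j-1}$, this bounds the quantity in the statement by a constant times $n^{\frac{|\mathbf{m}|(1-Hd)}{2}}\int_{\R^{|\mathbf{m}|d}}\int_{O^\ell_k}|F(y)|\big(\prod_{(i,j)}(v^i_j)^{-Hd}\big)\E\big(\prod_{(i,j)}I_{i,j}\big)\,ds\,dy$, the only change being that the $s$-integration is now restricted to $O^\ell_k$. In these variables the defining inequality of $O^\ell_k$ reads simply $v^\ell_{2k}>v^\ell_{2k-1}$: the within-pair gap of the $k$-th pair exceeds the between-pair gap immediately preceding it.

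Next I would expand $\E(\prod_{(i,j)}I_{i,j})$ by means of $|e^{\iota(z_1-z_2)}-1|\le|e^{\iota z_1}-1|+|e^{\iota z_2}-1|$, replacing each $I_{i,j}$ by one of its two oscillatory terms, namely an oscillation in $v^i_j$ or in $v^i_{j+1}$, each carrying the field $y^i_j$. The product becomes a sum over such choices, and after the $ds$-integration the size of a given term is governed by the number of \emph{distinct} oscillated gaps: each oscillated gap produces a factor $n^{Hd-1}$ through Lemma \ref{lema1.1} (bounding, if the gap is hit by two factors, one of them by $2$), whereas an un-oscillated gap only contributes the bounded factor $\int_0^{c}v^{-Hd}dv$. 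Since two factors can share a gap only when they are adjacent and involve the same $X$-variable, the unique choice with the fewest distinct oscillated gaps is the one in which the consecutive factors $I_{i,2p-1},I_{i,2p}$ jointly oscillate the within-pair gap $v^i_{2p}$ (both through $X^i_{2p}$) for every pair $p$, leaving all between-pair gaps un-oscillated; this yields the balanced power $\tfrac{|\mathbf{m}|(1-Hd)}{2}+\tfrac{|\mathbf{m}|}{2}(Hd-1)=0$. Every other choice produces strictly more factors of $n^{Hd-1}$ and, restricted to $O^\ell_k\subset D_\mathbf{m}$, is already $o(1)$ with integrable weights; so it remains only to gain decay in the dominant choice.

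For the dominant choice the between-pair gap $v^\ell_{2k-1}$ preceding the $k$-th pair is un-oscillated, so on $O^\ell_k$ I would integrate it over $(0,v^\ell_{2k})$ instead of $(0,c)$, producing the extra factor $\int_0^{v^\ell_{2k}}(v^\ell_{2k-1})^{-Hd}dv^\ell_{2k-1}=\tfrac{(v^\ell_{2k})^{1-Hd}}{1-Hd}$. This inserts an additional $(v^\ell_{2k})^{1-Hd}$ into the joint integral of the $k$-th pair, which becomes $\int_0^{c}(v^\ell_{2k})^{1-2Hd}\,\E\big[\,|e^{\iota y^\ell_{2k-1}\cdot X^\ell_{2k}/(n^H(v^\ell_{2k})^H)}-1|\,|e^{-\iota y^\ell_{2k}\cdot X^\ell_{2k}/(n^H(v^\ell_{2k})^H)}-1|\,\big]\,dv^\ell_{2k}$, the two correlated factors coming from $I_{\ell,2k-1}$ and $I_{\ell,2k}$. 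Bounding each by $c_\beta|z|^\beta$ with $\beta=\tfrac1H-d-\epsilon$ gives $c\,n^{-2H\beta}|y^\ell_{2k-1}|^\beta|y^\ell_{2k}|^\beta\int_0^{c}(v^\ell_{2k})^{1-2Hd-2H\beta}dv^\ell_{2k}$, the last integral converging since $2H\beta<2-2Hd$. Hence the $k$-th pair contributes $n^{-2H\beta}=n^{2Hd-2+2H\epsilon}$ in place of the balanced $n^{Hd-1}$, while each of the remaining $\tfrac{|\mathbf{m}|}{2}-1$ pairs still contributes $n^{Hd-1}$ via Lemma \ref{lema1.1}.

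Collecting the powers, the dominant term is bounded by $c\,n^{Hd-1+2H\epsilon}\int_{\R^{|\mathbf{m}|d}}|F(y)|\,|y^\ell_{2k-1}|^{\beta}|y^\ell_{2k}|^{\beta}\prod_{p}|y^{i_p}_{2p-1}|^{\frac1H-d}\,dy$, which tends to $0$ for $\epsilon$ small; the $y$-integral is finite because every weight exponent is at most $\tfrac1H-d$, so each one-dimensional factor is controlled by $f\in H^{\frac1H-d}_0$ (the un-weighted coordinates using only $f\in L^1$). The main obstacle is precisely the estimate of the $k$-th pair: bounding one of its two factors by $2$ and using a single oscillation together with the constraint yields only the borderline power $n^{Hd-1}$, or else forces the inadmissible weight $|y^\ell_{2k-1}|^{2/H-2d}$. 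One is therefore compelled to keep the joint two-field oscillation and to split the weight as $\beta+\beta$ with $\beta<\tfrac1H-d$ yet $2\beta$ close to $\tfrac2H-2d$, which is exactly the regime where the integrability granted by $f\in H^{\frac1H-d}_0$ and the convergence of the gap integrals (needing $H>\tfrac1{d+1}$) are simultaneously saturated.
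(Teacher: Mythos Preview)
Your proposal is correct and follows essentially the same strategy as the paper's proof. Both proofs reduce via Fourier inversion, local nondeterminism, and the substitution $v^i_j=s^i_j-s^i_{j-1}$ to the same upper bound involving $\E\big(\prod_{(i,j)}I_{i,j}\big)$ under the constraint $v^\ell_{2k-1}<v^\ell_{2k}$; and in both, the decisive computation is the joint two--field estimate on the product $I_{\ell,2k-1}I_{\ell,2k}$ via $|e^{\iota z}-1|\le c_\beta|z|^\beta$, combined with the constraint to make $\int (v^\ell_{2k})^{1-2Hd-2H\beta}dv^\ell_{2k}$ converge for $2H\beta<2-2Hd$.

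The organizational difference is this: the paper bounds all but a carefully selected subset $J^\ell_k$ of $\tfrac{|\mathbf{m}|}{2}-1$ mutually independent factors (plus the pair $I_{\ell,2k-1}I_{\ell,2k}$) by~$2$ at the outset, then expands only the key product into $A^\ell_{k,1}+A^\ell_{k,2}$ and handles the two pieces separately (the first via two applications of Lemma~\ref{lema1.1}, the second via the constraint). You instead expand every $I_{i,j}$ into its two oscillatory pieces, observe that the unique term with the fewest distinct oscillated gaps is the one where every pair $(2p-1,2p)$ oscillates $v^i_{2p}$ through $X^i_{2p}$ (uniqueness being forced by $X^N_{m_N+1}=0$), and treat all other terms as subdominant. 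Your $A^\ell_{k,2}$ is precisely the dominant term, and your subdominant terms subsume the paper's $A^\ell_{k,1}$. The paper's selection is cleaner bookkeeping; your expansion is more explicit about why this particular pair is the obstruction. A minor technical difference: you integrate $v^\ell_{2k-1}$ first over $(0,v^\ell_{2k})$ to pick up $(v^\ell_{2k})^{1-Hd}$, whereas the paper integrates $v^\ell_{2k}$ first over $(v^\ell_{2k-1},b_N)$; the resulting one--variable integrals and the admissible range of $\beta$ coincide, and your choice $\beta=\tfrac1H-d-\epsilon$ and the paper's $\beta=\tfrac{3(1-Hd)}{4H}$ both lie in it.
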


\begin{pf} Using the arguments and notation in the proof of Proposition
\ref{prop2}, we obtain
\begin{eqnarray*}
&&n^{{|\mathbf{m}|(1+Hd)}/{2}} \Biggl\llvert \E \Biggl(\int_{O^\ell
_k}\prod
^N_{i=1}\prod
_{j=1}^{m_i} f\bigl(n^{H}B
\bigl(s^i_j\bigr)\bigr) \,ds \Biggr)\Biggr\rrvert
\\
&&\qquad \leq c_1 n^{{|\mathbf{m}|(1-Hd)}/{2}}\\
&&\qquad\quad{}\times \int_{\R^{|\mathbf{m}|d}} \int
_{O^\ell_k} \bigl|F(y)\bigr| \Biggl(\prod^{N}_{i=1}
\prod^{m_i}_{j=1} \bigl(s^i_j-s^i_{j-1}
\bigr)^{-H} \Biggr)
\\
&&\hspace*{53.7pt}\qquad\quad{} \times\E \Biggl( \prod^{N}_{i=1}\prod
^{m_i}_{j=1} \biggl|\exp \biggl(\iota
\frac{y^i_j\cdot X^i_{j+1}}{n^H
(s^i_{j+1}-s^i_{j})^H} \\
&&\qquad\quad\hspace*{136pt}{}- \iota\frac{y^i_j \cdot X^i_j}{n^H
(s^i_{j}-s^i_{j-1})^H} \biggr)-1 \biggr| \Biggr) \,ds \,dy,
\end{eqnarray*}
where $X^N_{m_N+1}=0$, $X^{i}_{m_i+1}=X^{i+1}_{1}$ for $1\leq i\leq
N-1$, and $\sqrt{\kappa_H}X^i_j$ ($1\leq i\leq N$, $1\leq j\leq m_i$)
are independent copies of a $d$-dimensional standard normal random vector.

We make the change of variables $v^{i}_j=s^i_{j}-s^i_{j-1}$ for all
$(i,j)\in J_0$. The integral domain $O^\ell_k$ becomes
\[
D^\ell_k= \biggl\{v\in\R^{|\mathbf{m}|}_+\dvtx
a_1<\sum_{(i,j)\in J_0} v^{i}_j
< b_N, v^\ell_{2k-1}<v^\ell_{2k}
\biggr\}.
\]

For $(i,j)\in J_0$, define
\[
I_{i,j}= \biggl|\exp \biggl(\iota\frac{y^i_j\cdot X^i_{j+1}}{n^H
(v^i_{j+1})^H} - \iota
\frac{y^i_j \cdot X^i_j}{n^H
(v^i_{j})^H} \biggr)-1 \biggr|.
\]
Then
%
\begin{eqnarray}\label{e331}
&&
n^{{|\mathbf{m}|(1+Hd)}/{2}} \biggl|\E \Biggl(\int_{O^\ell
_k}\prod
^N_{i=1}\prod_{j=1}^{m_i}
f\bigl(n^{H}B\bigl(s^i_j\bigr)\bigr) \,ds
\Biggr) \biggr|
\nonumber\\[-8pt]\\[-8pt]
&&\qquad \leq c_1 n^{{|\mathbf{m}|(1-Hd)}/{2}} \int_{\R^{|\mathbf{m}|d}} \int
_{D^\ell_k} F(y) \E \biggl( \prod_{(i,j)\in J_0}
\bigl(v^i_j \bigr)^{-Hd}I_{i,j}
\biggr) \,dv \,dy.
\nonumber
\end{eqnarray}
Next we estimate the expectation in (\ref{e331}). We are going to
use an argument similar to the one used in the proof of Proposition
\ref{prop2}, based on the selection of some factors in the above product. Here,
the dependent product that will play a basic role will be $I_{\ell,
2k} I_{\ell,2k-1}$, due to the definition of the set $O_k^\ell$.
Define
\[
J^{\ell}_{k}=J^{\ell}_{k,1}\cup
J^{\ell}_{k,2},
\]
where
\begin{eqnarray*}
J^{\ell}_{k,1}&=&\bigl\{(i,j)\in J_0\dvtx  \#(i,j)<\#(
\ell,2k-2), \#(i,j) \mbox{ odd} \bigr\},
\\
J^{\ell}_{k,2}&=&\bigl\{(i,j)\in J_0\dvtx  \#(i,j)>\#(
\ell,2k), \#(i,j) \mbox{ even} \bigr\}.
\end{eqnarray*}

Since all exponents $m_i$ are even, the number of elements in $J^{\ell
}_k$ is $\frac{|\mathbf{m}|-2}{2}=\frac{|\mathbf{m}|}{2}-1$. From
the definition of $I_{i,j}$, we know that random variables $I_{\ell,
2k} I_{\ell,2k-1}$ and $I_{i,j}$ for $(i,j)\in J^{\ell}_{k}$ are
independent. Then
\begin{eqnarray*}
&&
n^{{|\mathbf{m}|(1+Hd)}/{2}} \Biggl|\E \Biggl(\int_{O^\ell
_k}\prod
^N_{i=1}\prod_{j=1}^{m_i}
f\bigl(n^{H}B\bigl(s^i_j\bigr)\bigr) \,ds
\Biggr) \Biggr|
\\
&&\qquad \leq c_2 n^{{|\mathbf{m}|(1-Hd)}/{2}} \int_{\R^{|\mathbf{m}|d}} \int
_{D^\ell_k} F(y) \E(I_{\ell,
2k}I_{\ell,2k-1})
\\
&&\hspace*{108.7pt}\qquad\quad{} \times\prod_{(i,j)\in J_0} \bigl(v^i_j
\bigr)^{-Hd} \prod_{(i,j)\in J^{\ell}_{k}}
\E(I_{i,j}) \,dv \,dy.
\end{eqnarray*}

For $(i,j)\in J^{\ell}_k$ and $(i,j)\neq(N,m_N)$, define
\[
a_{i,j}=\bigl(v^i_j v^i_{j+1}
\bigr)^{-Hd} \E(I_{i, j})
\]
and $a_{N,m_N}=(v^N_{m_N})^{-Hd}  \E(I_{N, m_N})$. From Lemma \ref
{lema11}, we obtain
\[
\int_{[0,b_N]^2} a_{i,j} \,d v^i_{j}
\,d v^i_{j+1}\leq c_3 n^{Hd-1}
\bigl|y^i_j\bigr|^{{1}/{H}-d}
\]
for all $(i,j)\in J^{\ell}_k$. Therefore,
%
\begin{eqnarray}\label{e332}
&&
n^{{|\mathbf{m}|(1+Hd)}/{2}} \Biggl|\E \Biggl(\int_{O^\ell
_k}\prod
^N_{i=1}\prod_{j=1}^{m_i}
f\bigl(n^{H}B\bigl(s^i_j\bigr)\bigr) \,ds
\Biggr) \Biggr|
\nonumber\\[-8pt]\\[-8pt]
&&\qquad \leq c_4 n^{1-Hd} \int_{\R^{|\mathbf{m}|d}} F(y)
\prod_{(i,j)\in J^{\ell}_{k}} \bigl|y^i_j\bigr|^{{1}/{H}-d}
I^{\ell}_k \,dy,
\nonumber
\end{eqnarray}
where
\[
I^{\ell}_k=\int^{b_N}_0
\int^{b_N}_{v^{\ell}_{2k-1}}\int^{b_N}_{0}
\bigl(v^{\ell}_{2k+1}v^{\ell}_{2k}v^{\ell}_{2k-1}
\bigr)^{-Hd} \E(I_{\ell,
2k}I_{\ell,2k-1}) \,dv^{\ell}_{2k+1}
\,dv^{\ell}_{2k} \,dv^{\ell}_{2k-1}.
\]

Notice that $|e^{\iota(z_1-z_2)}-1|\leq|e^{\iota z_1}-1|+|e^{\iota
z_2}-1|$ for all $z_1, z_2\in\R$. Using the independence of $X^\ell
_{2k-1}, X^\ell_{2k}$ and $X^\ell_{2k+1}$, we obtain
\[
\E (I_{\ell, 2k-1} I_{\ell,2k} )\leq A^{\ell}_{k,1}+A^{\ell}_{k,2},
\]
where
\begin{eqnarray*}
A^{\ell}_{k,1}&=& \E\bigl|e^{\iota{y^\ell_{2k-1}\cdot X^\ell
_{2k}}/({n^H (v^\ell_{2k})^H})} -1\bigr|
\E\bigl|e^{\iota{y^\ell
_{2k}\cdot X^\ell_{2k+1}}/({n^H(v^\ell_{2k+1})^H})}-1\bigr|
\\
&&{} +\E\bigl|e^{ \iota{y^\ell_{2k-1} \cdot X^\ell
_{2k-1}}/({n^H(v^\ell_{2k-1})^H})}-1\bigr| \E\bigl|e^{\iota{y^\ell
_{2k}\cdot X^\ell_{2k+1}}/({n^H(v^\ell_{2k+1})^H})}-1\bigr|
\\
&&{} +\E\bigl|e^{ \iota{y^\ell_{2k-1} \cdot X^\ell
_{2k-1}}/({n^H(v^\ell_{2k-1})^H})}-1\bigr| \E\bigl|e^{\iota{y^\ell_{2k}
\cdot X^\ell_{2k}}/({n^H
(v^\ell_{2k})^H})}-1\bigr|
\end{eqnarray*}
and
\[
A^{\ell}_{k,2}=\E \bigl( \bigl|e^{\iota{y^\ell_{2k-1}\cdot X^\ell
_{2k}}/({n^H (v^\ell_{2k})^H})} -1\bigr|
\bigl|e^{\iota{y^\ell_{2k} \cdot
X^\ell_{2k}}/({n^H
(v^\ell_{2k})^H})}-1\bigr| \bigr).
\]

Now we have
%
\begin{equation}\label{e333}
I^{\ell}_k=I^{\ell}_{k,1}+I^{\ell}_{k,2},
\end{equation}
where
\[
I^{\ell}_{k,i}=\int^{b_N}_0
\int^{b_N}_{v^{\ell}_{2k-1}}\int^{b_N}_{0}
\bigl(v^{\ell}_{2k+1}v^{\ell}_{2k}v^{\ell}_{2k-1}
\bigr)^{-Hd} A^{\ell}_{k,i} \,dv^{\ell}_{2k+1}
\,dv^{\ell}_{2k} \,dv^{\ell}_{2k-1}
\]
for $i=1,2$. By Lemma \ref{lema11},
%
\begin{equation}\label{e334}
I^{\ell}_{k,1}\leq c_5 n^{-2(1-Hd)}\bigl|y^\ell_{2k-1}\bigr|^{
{1}/{H}-d}\bigl|y^\ell_{2k}\bigr|^{{1}/{H}-d}.
\end{equation}
For any $\beta\in[0,1]$, we have $|e^{\iota z}-1|\leq c_{\beta
}|z|^{\beta}$ for all $z\in\R$. Then
\[
A^{\ell}_{k,2}\leq c_6 n^{-2H\beta}
\bigl(v^\ell_{2k}\bigr)^{-2H\beta} \bigl|y^\ell_{2k-1}\bigr|^{\beta}\bigl|y^\ell_{2k}\bigr|^{\beta}.
\]
Therefore, if $1-Hd<2H\beta< 2-2Hd$,
\begin{eqnarray*}
I^{\ell}_{k,2} &\leq& c_6 n^{-2H\beta}
\bigl|y^\ell_{2k-1}\bigr|^{\beta}\bigl|y^\ell
_{2k}\bigr|^{\beta} \\
&&\hspace*{0pt}{}\times\int^{b_N}_0\int^{b_N}_{v^{\ell}_{2k-1}}\int^{b_N}_{0}
\bigl(v^{\ell}_{2k+1}\bigr)^{-Hd}
\bigl(v^{\ell}_{2k}\bigr)^{-Hd-2H\beta}\\
&&\hspace*{81pt}{}\times \bigl(v^{\ell}_{2k-1}
\bigr)^{-Hd} \,dv^{\ell}_{2k+1} \,dv^{\ell}_{2k}
\,dv^{\ell}_{2k-1}
\\
&\leq& c_7 n^{-2H\beta} \bigl|y^\ell_{2k-1}\bigr|^{\beta}\bigl|y^\ell
_{2k}\bigr|^{\beta} \int^{b_N}_0
\bigl(v^{\ell}_{2k-1}\bigr)^{1-2Hd-2H\beta} \,dv^{\ell}_{2k-1}
\\
&\leq& c_8 n^{-2H\beta} \bigl|y^\ell_{2k-1}\bigr|^{\beta}\bigl|y^\ell
_{2k}\bigr|^{\beta}.
\end{eqnarray*}

Choose $\beta=\frac{3(1-Hd)}{4H}$,
%
\begin{equation}\label{e335}
I^{\ell}_{k,2} \leq c_8 n^{-{3(1-Hd)}/{2}}
\bigl|y^\ell _{2k-1}\bigr|^{{3(1-Hd)}/({4H})}\bigl|y^\ell_{2k}\bigr|^{{3(1-Hd)}/({4H})}.
\end{equation}

Substituting (\ref{e334}) and (\ref{e335}) into (\ref
{e333}), we obtain
%
\begin{eqnarray}\label{e336}
I^{\ell}_k&\leq& c_9 n^{-{3(1-Hd)}/{2}}
\bigl(\bigl|y^\ell _{2k-1}\bigr|^{{3(1-Hd)}/({4H})}\bigl|y^\ell_{2k}\bigr|^{
{3(1-Hd)}/({4H})}
\nonumber\\[-8pt]\\[-8pt]
&&\hspace*{112.6pt}{}+\bigl|y^\ell_{2k-1}\bigr|^{{1}/{H}-d}\bigl|y^\ell_{2k}\bigr|^{
{1}/{H}-d}
\bigr).\nonumber
\end{eqnarray}

Our result now follows easily from (\ref{e332}), (\ref{e336})
and the assumption $f\in H^{{1}/{H}-d}_0$.
\end{pf}

Consider now the convergence of moments when all exponents $m_i$ are even.
On each portion of the coordinates $a_{i}<s^i_{1}<\cdots
<s^i_{m_i}<b_{i}$ we make the following change of variables:
\[
u^i_{2k}= s^i_{2k} \quad\mbox{and}\quad
u^i_{2k-1}=n\bigl( s^i_{2k}-
s^i_{2k-1}\bigr)\qquad \mbox{where } 1\le k \le m_i/2
\]
with the convention $u^i_0=s^i_0=a_i$. The idea is to couple each
variable with an odd
subindex with the next one. In this way we obtain
%
\begin{eqnarray}
\label{e341}
\E(G_n) &=& \mathbf{m}! n^K \E \Biggl( \int_{D^n_{\mathbf{m}}} \prod
_{i=1}^N \prod
_{k=1}^{{m_i}/2}  f\bigl(n^{H}B
\bigl(u^i_{2k}\bigr)\bigr)\nonumber\\[-8pt]\\[-8pt]
&&\hspace*{92.4pt}{}\times f\biggl(n^{H}B
\biggl(u^i_{2k}- \frac{u^i_{2k-1}}n \biggr)\biggr) \,du
\Biggr),
\nonumber
\end{eqnarray}
where $K$ and $D_{{\mathbf m}}^n$ are as follows:
\[
K = \frac{|\mathbf{m}|Hd} 2
\]
and
\begin{eqnarray*}
D^n_\mathbf{m} &=& \biggl\{ u\in\R^{|\mathbf{m}|}\dvtx
a_i<u^i_2 <u^i_4<
\cdots <u^i_{m_i} <b_i;
\\
&&\hspace*{6.5pt} 0< u^i_{2k-1} <n\bigl(u^i_{2k}
-u^i_{2k-2}\bigr), 1\le k\le\frac
{m_i}2 \biggr\}.
\end{eqnarray*}

We compute the expectation (\ref{e341}) in the following way. Define
the $\mathbf{m}$-dimensional Gaussian random vector $X(u)$ by
\[
X^i_{2k} (u)= B\bigl(u^i_{2k}
\bigr) \quad\mbox{and}\quad X^i_{2k-1}(u)=n^H \biggl(B
\biggl(u^i_{2k}- \frac{u^i_{2k-1}}n \biggr) - B
\bigl(u^i_{2k}\bigr) \biggr),
\]
where $1\le k \le\frac{m_i}2$. The covariance matrix and
the probability density function of the Gaussian random vector $X(u)$
are denoted by $Q_{n}(u)$ and
\[
p_{n}(x) = (2\pi)^{- {|\mathbf{m}|d}/2}\bigl(\det Q_{n}(u)
\bigr)^{-{1}/{2}}\exp \bigl(-\tfrac{1}{2} xQ_{n}(u)^{-1}x^{T}
\bigr),
\]
respectively. With the above notation we can write
\[
\E(G_n) = \mathbf{m}! n^K \int_{\R^{|\mathbf{m}|d}}
\int_{D_\mathbf{m}} \prod_{i=1}^N
\prod_{k=1}^{{m_i}/2} f\bigl(n^{H}x^i_{2k}
\bigr)f\bigl(n^{H} x^i_{2k}
+x^i_{2k-1}\bigr) p_n(x) \,du \,dx.
\]

Making the change of variables $y^i_j=n^Hx^i_j$ if $j$ is even, and
$y^i_j= x^i_j$ if $j$ is odd, we then obtain
%
\begin{equation}
\label{eq10} \E(G_n) = \mathbf{m}! \int_{\R^{|\mathbf{m}|d}}
\int_{D^n_\mathbf{m}} \prod_{i=1}^N
\prod_{k=1}^{{m_i}/2} f\bigl(
y^i_{2k} \bigr)f\bigl( y^i_{2k}
+y^i_{2k-1}\bigr) p_n\bigl(y(n)\bigr) \,du \,dy,\hspace*{-35pt}
\end{equation}
where $y ^i_j(n)= n^{-H} y^i_j$ if $j$ is even and $y
^i_j(n)= y^i_j$ if $j$ is odd.

\begin{proposition} \label{prop4} Suppose that all exponents $m_i$ are
even. Then
%
\begin{equation}
\lim_{n\to\infty}\E(G_n)=C_{H,d}^{{|\mathbf{m}|}/{2}}
\Vert f\Vert^{ |\mathbf{m}| }_{{1}/{H}-d } \mathbb{E} \Biggl( \prod
_{i=1}^N \bigl( W\bigl(L_{b_i}(0)\bigr)-
W\bigl(L_{a_i}(0)\bigr) \bigr)^{m_i} \Biggr).\hspace*{-35pt}
\end{equation}
\end{proposition}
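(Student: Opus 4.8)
The starting point is the representation \eref{eq10}, which writes $\E(G_n)$ as an integral over $D^n_\mathbf{m}$ of $\prod_{i,k} f(y^i_{2k})f(y^i_{2k}+y^i_{2k-1})$ against the density $p_n(y(n))$. Following the heuristic around \eref{e.3.1}, I expect that as $n\to\infty$ the even coordinates $u^i_{2k}$, carrying the unscaled values $B(u^i_{2k})$, freeze into the occupation density of $B$ at the origin, while the odd coordinates $u^i_{2k-1}$, carrying the rescaled increments $X^i_{2k-1}(u)=n^H\big(B(u^i_{2k}-u^i_{2k-1}/n)-B(u^i_{2k})\big)$, decouple into mutually independent centered Gaussian vectors with covariance $(u^i_{2k-1})^{2H}I_d$, independent of $B$. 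Thus the plan is to prove
\[
\lim_{n\to\infty}\E(G_n)=\mathbf{m}!\int_{\R^{|\mathbf{m}|d}}\int_{D^{\infty}_\mathbf{m}}\prod_{i=1}^N\prod_{k=1}^{m_i/2} f(y^i_{2k})f(y^i_{2k}+y^i_{2k-1})\,p_\infty(u,y)\,du\,dy,
\]
where the odd variables now range over $(0,\infty)$ and $p_\infty$ is the factorized limiting density
\begin{align*}
p_\infty(u,y)={}&(2\pi)^{-\frac{|\mathbf{m}|d}{4}}\big(\det A(u)\big)^{-1/2}\\
&\times\prod_{i=1}^N\prod_{k=1}^{m_i/2}(2\pi)^{-\frac{d}{2}}(u^i_{2k-1})^{-Hd}\exp\Big(-\frac{|y^i_{2k-1}|^2}{2(u^i_{2k-1})^{2H}}\Big),
\end{align*}
with $A(u)$ the covariance matrix of $\big(B(u^i_{2k})\big)$.

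First I would establish the pointwise convergence $p_n(y(n))\to p_\infty(u,y)$ for fixed $u$ with distinct even coordinates and fixed $y$. Since $y^i_{2k}(n)=n^{-H}y^i_{2k}\to 0$, the even block of $Q_n(u)$ converges to $A(u)$ and is evaluated at the origin, producing the factor $(2\pi)^{-\frac{|\mathbf{m}|d}{4}}(\det A(u))^{-1/2}$ that matches Lemma \ref{lema2}. The local nondeterminism property \eref{lndp}, applied to the vectors $\big(B(u^i_{2k}),B(u^i_{2k}-u^i_{2k-1}/n)\big)$, keeps $\det Q_n(u)$ nondegenerate and, crucially, forces the cross-covariances between the odd block and everything else to vanish, so that the odd coordinates become asymptotically independent with the stated variances. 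This factorization is the analytic heart of the statement.

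The main obstacle is promoting this pointwise limit to convergence of the integral, since $D^n_\mathbf{m}$ lets each odd variable run up to $n(u^i_{2k}-u^i_{2k-2})$. I would split the odd variables at a fixed level $K$. On the compact region $\{u^i_{2k-1}\le K\ \text{for all}\ i,k\}$ dominated convergence applies, using $|e^{\iota z}-1|\le c_\beta|z|^\beta$ with $\beta=\frac{1}{H}-d\in(0,1)$ (which is where $H>\frac{1}{d+1}$ enters) together with Lemma \ref{lema1.1} to produce an $n$-uniform integrable majorant of the form $|f(y)|\,|y|^{\frac{1}{H}-d}$. The region where some odd variable exceeds $\frac{n}{2}(u^i_{2k}-u^i_{2k-2})$ is contained in a union of the sets $O^\ell_k$, whose contribution vanishes as $n\to\infty$ by Proposition \ref{prop3}. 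It then remains to control the intermediate range, where some odd variable lies in $[K,\frac{n}{2}(u^i_{2k}-u^i_{2k-2})]$; using the same $|y|^{\frac{1}{H}-d}$ estimates from Lemma \ref{lema1.1} I would bound this contribution by a quantity that is small uniformly in $n$ as $K\to\infty$. I expect this intermediate tail estimate, again requiring $H>\frac{1}{d+1}$, to be the most delicate step.

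Finally I would evaluate the limiting integral. For each consecutive pair I add the constant $-1$ to $\exp\big(-|y^i_{2k-1}|^2/(2(u^i_{2k-1})^{2H})\big)$ at no cost, since $\int_{\R^d}f(y^i_{2k}+y^i_{2k-1})\,dy^i_{2k-1}=0$; integrating the resulting factor in $u^i_{2k-1}$ over $(0,\infty)$ and rescaling yields $\frac{C_{H,d}}{2}\,|y^i_{2k-1}|^{\frac{1}{H}-d}$, and the substitution $z=y^i_{2k}+y^i_{2k-1}$ turns the remaining $(y^i_{2k},y^i_{2k-1})$ integral into $\frac{C_{H,d}}{2}\|f\|^2_{\frac{1}{H}-d}$ per pair. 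The even coordinates are integrated against $(2\pi)^{-\frac{|\mathbf{m}|d}{4}}(\det A(u))^{-1/2}$ over the ordered simplex $a_i<u^i_2<\cdots<u^i_{m_i}<b_i$; relating this to the full-cube integral of Lemma \ref{lema2} introduces the factors $(m_i/2)!$. Collecting the constants $\mathbf{m}!$, the $2^{m_i/2}$, $(2\pi)^{m_id/4}$ and $(m_i/2)!$ of Lemma \ref{lema2} then collapses everything to $C_{H,d}^{|\mathbf{m}|/2}\|f\|^{|\mathbf{m}|}_{\frac{1}{H}-d}\,\E\big(\prod_{i=1}^N(W(L_{b_i}(0))-W(L_{a_i}(0)))^{m_i}\big)$, which is the claim.
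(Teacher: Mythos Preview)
Your overall strategy coincides with the paper's: identify the pointwise limit of $p_n(y(n))$ via the decoupling of the even and odd blocks, split the odd variables at a level $K$, apply dominated convergence on the compact piece, invoke Proposition~\ref{prop3} to kill the far tail $u^i_{2k-1}>\tfrac{n}{2}(u^i_{2k}-u^i_{2k-2})$, control the intermediate range $[K,\tfrac{n}{2}(u^i_{2k}-u^i_{2k-2})]$ uniformly in $n$, and then evaluate the limit using the $-1$ trick and Lemma~\ref{lema2}. That is exactly the paper's three-step argument.

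There is one genuine gap in your outline. For the intermediate tail you propose to use Lemma~\ref{lema1.1} with the endpoint exponent $\beta=\tfrac{1}{H}-d$, but this yields no decay in $K$: with $H\beta=1-Hd$ the factor $K^{1-Hd-H\beta}$ that should drive the bound to zero equals $K^0$. The paper gets around this by first reducing to $f$ smooth with compact support (so that any $\beta\le 1$ is admissible in the $|y|^\beta$ integrals), and then using Lemma~\ref{lema4} with a $\beta$ satisfying the \emph{strict} inequality $1-Hd<H\beta<H\wedge(2-2Hd)$; this produces the needed factor $K^{1-Hd-H\beta}\to 0$. You should add that reduction step and replace the appeal to Lemma~\ref{lema1.1} on the intermediate range by the refined estimates of Lemmas~\ref{lema3}--\ref{lema4}. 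A second, smaller point: on the compact region the paper does not go through the $|e^{\iota z}-1|$ bound at all, but instead bounds the density $p_n(y(n))$ directly via local nondeterminism (inequality \eref{e1}) and then uses Lemma~\ref{lema5} plus an $L^p$ uniform-integrability argument; your Fourier route can be made to work, but as written it blurs the density representation \eref{eq10} with the characteristic-function representation used in Propositions~\ref{prop1}--\ref{prop3}.
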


\begin{pf}
Notice that we can find a sequence of functions $f_N$, which are
infinitely differentiable with compact support, such that
$\int_{\R^d} f_N(x)  \,dx=0$ and
\[
\lim_{N\to\infty} \int_{\R^d}
\bigl|f(x)-f_N(x)\bigr| \bigl(|x|^{1/H
-d} \vee1 \bigr) \,dx=0.
\]
So, by Proposition \ref{prop1}, we can assume that
$f$ is infinitely differentiable with compact support and
$\int_{\R^d} f (x)  \,dx=0$.

The equation (\ref{eq10}) can be written as
%
\begin{equation}
\label{eq10a} \E(G_n) = \mathbf{m}! \int_{\R^{|\mathbf{m}|d}}
\int_{D^n_\mathbf{m}} F(y) p_n\bigl(y(n)\bigr) \,du \,dy,\vadjust{\goodbreak}
\end{equation}
where
\[
F(y)=\prod_{i=1}^N\prod
_{k=1}^{ {m_i}/2} f\bigl(y^i_{2k}
\bigr) f\bigl(y^i_{2k} +y^i_{2k-1}
\bigr).
\]
The proof will be done in several steps.

\textit{Step} 1.
Let us compute the limit of the density $p_n(y(n))$ as $n$ tends to infinity.
We split the random vector $X(u)$ into two random vectors $X(u)=(Y(u),Z_n(u))$,
where $Y(u)$ contains the components of $X(u)$ with even subindices,
and $Z_n (u)$ contains the components with odd subindices.
That is, $Y(u)$ is an $\frac{|\mathbf{m}|d} 2$-dimensional random
vector, such that
$Y^i_k(u)=B(u^i_{2k})$ for $ 1 \le i\le N$ and $1\le k\le
\frac{m_i}2$. We denote by $A (u)$ the covariance matrix of $Y(u)$,
which does not depend on $n$. On the other hand, the covariance
matrix between the components of $Z_n (u)$ and $Y(u)$ converges to
the zero matrix, and the covariance matrix of the random vector
$Z_n(u)$ converges to
a diagonal matrix with entries equal to $(u^i_{2k-1})^{2H}$, $1 \le
i\le N$, and $1\le k\le\frac{m_i}2$. Therefore,
\begin{eqnarray*}
\lim_{n\rightarrow\infty} p_{n}\bigl(y(n)\bigr)
&=& (2\pi)^{-
{|\mathbf{m}|d}/2} \bigl(\det A (u)\bigr)^{-{1}/{2}}\\
&&{}\times \prod
_{i=1}^N \prod_{k=1}^{ {m_i}/2}
\bigl(u^i_{2k-1}\bigr)^{-Hd} \exp \biggl( -
\frac{|y^i_{2k-1}|^2} { 2(u^i_{2k-1})^{2H}} \biggr).
\end{eqnarray*}
On the other hand, the region $D_\mathbf{m}^{ n}$ converges, as $n$
tends to infinity, to
\[
\biggl\{ u\in\R^{ |\mathbf{m}| }\dvtx  a_i<u^i_2 <
\cdots<u^i_{m_i} <b_i; 0<u^i_{2k-1}<
\infty; 1\le k\le\frac{m_i} 2, 1\le i\le N \biggr\}.
\]
Notice that we can add a term $-1$ because $\int_{\R^d}
F(y)  \,dy^i_{2k-1} =0$ for any $i,k$, and
\begin{eqnarray*}
&&\int_0^\infty u^{-Hd} \bigl[
e^{ -{ |y^i_{2k-1}|^2}/
({2u^{2H}})} -1 \bigr] \,du\\
&&\qquad =-\bigl|y^i_{2k-1}\bigr|^{1/H-d}
\int_0^\infty u^{-Hd} \bigl[ 1-
e^{ -{1}/({2u^{2H}})} \bigr] \,du.
\end{eqnarray*}
Therefore, provided that we can interchange the limit with the
integrals in the expression (\ref{eq10a}), we obtain
%
\begin{eqnarray}
\label{equ1}
\lim_{n\rightarrow\infty} \E(G_n)
&=& \mathbf{m}! 2^{-{|\mathbf{m}|}/2} (2\pi)^{-
{|\mathbf{m}|d}/4} C_{H,d}^{ {|\mathbf{m}|}/2 }
\| f\|_{1/H-d}^{ | \mathbf{m}| } \nonumber\\[-8pt]\\[-8pt]
&&{}\times\int_{O_{ {\mathbf{m}}/2} } \bigl(\det
A (w)\bigr)^{-{1}/{2}} \,dw,
\nonumber
\end{eqnarray}
where
\[
O_{ {\mathbf{m}}/2} = \bigl\{ w\in\R^{ { |\mathbf{m}| }/2 }\dvtx  a_i<w^i_1
< \cdots<w^i_{{m_i}/2} <b_i, 1\le i\le N \bigr\},
\]
and $A(w)=A(u)$ with the change of variable $w_k^i=u^i_{2k}$.
Finally, the right-hand side of (\ref{equ1}) can also be written as
%
\begin{eqnarray}
\label{equ3} &&\Biggl(\prod_{i=1}^N
\frac{m_i! }{2^{ {m_i}/2} ({m_i}/2)
!}C_{H,d}^{{m_i}/2} \| f\|_{1/H-d}^{ m_i }
\Biggr) \nonumber\\[-8pt]\\[-8pt]
&&\qquad{}\times\int_{\prod_{i=1}^N
[a_i,b_i]^{{m_i}/2}} (2\pi)^{- {|\mathbf{m}|d}/4} \bigl(\det A (w)
\bigr)^{-{1}/{2}} \,dw,\nonumber
\end{eqnarray}
and, taking into account Lemma \ref{lema2}, this would finish the proof.

\textit{Step} 2.
In order to justify the passage of the
limit inside the integrals, we decompose the region
$D^n_{\mathbf{m}}$ into two components as follows. For $K>0$, we
define
\[
D^n_{\mathbf{m},K,1} = \biggl\{ u\in D^n_{\mathbf{m}}\dvtx
0< u^i_{2k-1} <K\wedge n\bigl(u^i_{2k}
-u^i_{2k-2}\bigr); 1\le k\le\frac{m_i}2 \biggr\}
\]
and $D^n_{\mathbf{m},K,2} = D^n_{\mathbf{m}}-D^n_{\mathbf{m},K,1}
$. Then, $\E(G_n) =I^1_{n,K} +I^2_{n,K} $, where
\begin{eqnarray*}
I^1_{n,K} & = & \mathbf{m}! \int_{\R^{|\mathbf{m}|d}}
\int_{D^n_{\mathbf{m},K,1}} F(y) p_n\bigl(y(n)\bigr) \,du \,dy,
\\
I^2_{n,K} & = & \mathbf{m}! \int_{\R^{|\mathbf{m}|d}}
\int_{D^n_{\mathbf{m},K,2}} F(y) p_n\bigl(y(n)\bigr) \,du
\,dy.
\end{eqnarray*}
The region $ D^n_{\mathbf{m},K,1} $ is uniformly
bounded in $n$, and we can then interchange the limit and the
integral with respect to $u$, provided that we have a uniform
integrability condition. To do this we need the following estimate
of the density $p_n(y(n))$.

For any $\xi\in\mathbb{R}^{|\mathbf{m}|}$ with components
$(\xi^i_j)$, $1\le i \le N$, $1\le j\le m_i$, we can write
\begin{eqnarray*}
\langle\xi,X \rangle &=& \sum_{i=1}^N \Biggl( \sum
_{j=1}^{{m_i}/2} \xi_{2j}^i \cdot
B\bigl(u^i_{2j}\bigr) \\[-0.5pt]
&&\hspace*{20.6pt}{}+\sum_{k=1}^{{m_i}/2}
\xi_{2k-1}^i \cdot n^H \biggl(B
\biggl(u^i_{2k } -\frac{u^i_{2k-1}}n\biggr) - B
\bigl(u^i_{2k-2}\bigr) \biggr) \Biggr)
\\[-0.5pt]
&=& \sum_{i=1}^N \sum
_{k=1}^{{m_i}/2} \biggl( \sum
_{(\ell, 2j)
\ge(i,2k) } \xi^\ell_{2j} \biggr) \cdot
\biggl( B\biggl(u^i_{2k } -\frac
{u^i_{2k-1}}n\biggr) -B
\bigl(u^i_{2k-2}\bigr)\biggr)
\\[-0.5pt]
&&{} + \sum_{i=1}^N \sum
_{k=1}^{{m_i}/2} \biggl( \sum
_{(\ell, 2j) \ge(i,2k) } \xi^\ell_{2j} -n^H
\xi_{2k-1}^i \biggr) \cdot \biggl(B\bigl(u^i_{2k}
\bigr)- B\biggl(u^i_{2k } -\frac{u^i_{2k-1}}n\biggr)\!
\biggr).
\end{eqnarray*}
Here we have used the ordering $(\ell, 2j) \ge(i, 2k)$ if $\ell>i$
or $\ell=i$ and $j\ge k$.\vadjust{\goodbreak}

By the local nondeterminism property (\ref{modified-lndp}),
%
\begin{eqnarray}
\label{equ2} \Var \langle\xi,X \rangle
&\geq& k_{H} \Biggl[ \sum_{i=1}^N
\sum_{k=1}^{{m_i}/2} \biggl| \sum
_{(\ell, 2j) \ge(i,2k) } \xi^\ell_{2j} \biggr| ^2
\cdot \biggl( u^i_{2k } -\frac{u^i_{2k-1}}n -
u^i_{2k-2} \biggr)^{2H}
\nonumber
\\[-0.5pt]
&&\hspace*{30.5pt}{} + \sum_{i=1}^N \sum
_{k=1}^{{m_i}/2} \biggl| \sum_{(\ell, 2j) \ge(i,2k) }
\xi^\ell_{2j} -n^H \xi^i_{2k-1}
\biggr| ^2 \biggl( \frac{u^i_{2k-1}}n \biggr)^{2H} \Biggr]
\nonumber\\[-0.5pt]
&=& k_{H} \Biggl[ \sum_{i=1}^N
\sum_{k=1}^{{m_i}/2} \bigl| \eta^i_{2k}
\bigr|^2 \biggl( u^i_{2k } -\frac{u^i_{2k-1}}n -
u^i_{2k-2} \biggr)^{2H}
\\[-0.5pt]
&&\hspace*{22pt}{} + \sum_{i=1}^N \sum
_{k=1}^{{m_i}/2} \bigl| \eta^i_{2k}
-n^H \eta_{2k-1}^i \bigr|^2 \biggl(
\frac{u^i_{2k-1}}n \biggr)^{2H} \Biggr]
\nonumber
\\[-0.5pt]
&=:& k_{H} R(\eta),\nonumber
\end{eqnarray}
where we have made the change of variables
\[
\eta^i_{2k}=\sum_{(\ell, 2j) \ge(i,2k) }
\xi^\ell_{2j} \quad\mbox{and}\quad \eta^i_{2k-1}=
\xi^i_{2k-1}.
\]
This implies that
\begin{eqnarray*}
(\det Q_{n})^{-{1}/{2}} &=& (2\pi)^{-{|\mathbf{m}|d}/2 }\int
_{\R^{| \mathbf
{m}|d}}\exp \biggl(-\frac{1}{2}\Var \langle\xi,X \rangle
\biggr) \,d\xi
\\[-0.5pt]
&\leq& (2\pi)^{-{|\mathbf{m}|d}/2}\int_{\R^{{|\mathbf
{m}|d}}} \exp \biggl(-
\frac{k_{H}}{2}R(\eta) \biggr) \,d\eta
\\[-0.5pt]
&=& c_1 \prod_{i=1}^N \prod
_{k=1}^{{m_i}/2} \bigl( u^i_{2k-1}
\bigr)^{-Hd} \biggl( u^i_{2k } -\frac{u^i_{2k-1}}n
- u^i_{2k-2} \biggr)^{-Hd}.
\end{eqnarray*}
Therefore,
%
\begin{equation}\label{e1}
p_{n}\bigl(y(n)\bigr) \leq c_2 \prod
_{i=1}^N \prod_{k=1}^{{m_i}/2}
\bigl( u^i_{2k-1} \bigr)^{-Hd} \biggl(
u^i_{2k } -\frac{u^i_{2k-1}}n - u^i_{2k-2}
\biggr)^{-Hd}.
\end{equation}
As a consequence of (\ref{e1}) and the inequality (\ref{ineq5}) in
Lemma \ref{lema5},
\begin{eqnarray*}
&& \int_{D^n_{\mathbf{m},K,1}} p_n\bigl(y(n)\bigr) \,du
\\
&&\qquad \leq c_3\int_{D^n_{\mathbf{m},K,1}} \prod
_{i=1}^N \prod_{k=1}^{{m_i}/2}
\bigl( u^i_{2k-1} \bigr)^{-Hd} \biggl(
u^i_{2k
} -\frac{u^i_{2k-1}}n - u^i_{2k-2}
\biggr)^{-Hd}\,du
\\
&&\qquad \leq c_4,
\end{eqnarray*}
where $c_4$ is a constant independent of $n$ and $y$. Thus, taking into
account that the function $F(y)$ is integrable, by the dominated
convergence theorem we obtain
\[
\lim_{n\rightarrow\infty} I^1_{n,K} = \mathbf{m}! \int
_{\R
^{|\mathbf{m}|d}} F(y) \biggl( \lim_{n\rightarrow\infty} \int
_{D^n_{\mathbf{m},K,1}} p_n\bigl(y(n)\bigr) \,du \biggr) \,dy.
\]
On the other hand, again by (\ref{e1}) and Lemma \ref{lema5},
there exists $p>1$ such that
%
\begin{equation}
\sup_n \int_{D^n_{\mathbf{m},K,1}} \bigl|p_n
\bigl(y(n)\bigr)\bigr|^p \,du<\infty,
\end{equation}
which implies
\[
\lim_{n\rightarrow\infty} I^1_{n,K} = \mathbf{m}! \int
_{\R
^{|\mathbf{m}|d}} \int_{\R^{|\mathbf{m}|}} F(y) \lim
_{n\rightarrow
\infty} \mathbf{1}_{D^n_{\mathbf{m},K,1}} (u)p_n\bigl(y(n)
\bigr) \,du \,dy.
\]
With the same notation as above we get
%
\begin{eqnarray}\label{2314}\qquad
\lim_{n\rightarrow\infty} I^1_{n,K}
&=& \mathbf{m} ! (2\pi)^{-{|\mathbf{m}|d}/2 } \biggl(\int_{O_{ {\mathbf{m}}/2} }
\bigl(\det A (w)\bigr)^{-{1}/{2}} \,dw \biggr)
\nonumber\\
&&{} \times\int_{\R^{|\mathbf{m}|d}} \prod_{i=1}^N
\prod_{k=1}^{{m_i}/2} \biggl( f
\bigl(y^i_{2k}\bigr) f\bigl(y^i_{2k}
+ y^i_{2k-1}\bigr) \\
&&\hspace*{77pt}{}\times\int_0^K
u^{-Hd} \bigl( e^{-{|y^i_{2k-1}|^2}/({2u^{2H}})}-1 \bigr) \,du \biggr) \,dy.
\nonumber
\end{eqnarray}
The right-hand side of the above equality converges to the term in
(\ref{equ3}) as $K$ tends to
infinity.

\textit{Step} 3. Now it suffices to show that
%
\begin{equation}
\label{eq4} \lim_{K\to\infty}\limsup_{n\rightarrow\infty}
I^2_{n,K}=0.
\end{equation}
First we observe that
\[
D^n_{\mathbf{m},K,2} = \bigcup_{i=1}^N
\bigcup_{k=1}^{{m_i}/2} D^n_{\mathbf{m},K,i,k},
\]
where
\[
D^n_{\mathbf{m},K,i,k} = \bigl\{ u\in D^n_{\mathbf{m}}\dvtx
u^i_{2k-1}\geq K\wedge n\bigl(u^{i}_{2k}-u^{i}_{2k-2}
\bigr) \bigr\}.
\]
So we only need to show that
%
\begin{equation}
\label{eq5} \lim_{K\to\infty}\limsup_{n\rightarrow\infty} \int
_{\R
^{|\mathbf{m}|d}} \int_{ \bigcup_{i=1}^N \bigcup_{k=1}^{{m_i}/2}
D^n_{\mathbf{m},K,i,k}} F(y) p_n
\bigl(y(n)\bigr) \,du \,dy =0.
\end{equation}

As a consequence of Proposition \ref{prop3},
we can replace $D^n_{\mathbf{m},K,i,k}$ in (\ref{eq5}) with
\[
D^{n,1}_{\mathbf{m},K,i,k}= \biggl\{ u\in D^n_{\mathbf{m}};
K\leq u^i_{2k-1}\leq\frac{n(u^{i}_{2k}-u^{i}_{2k-2})}{2} \biggr\}
\]
and just show that
%
\begin{equation}
\label{eq6} \lim_{K\to\infty}\limsup_{n\rightarrow\infty} \int
_{\R
^{|\mathbf{m}|d}} \int_{ \bigcup_{i=1}^N \bigcup_{k=1}^{{m_i}/2}
D^{n,1}_{\mathbf{m},K,i,k}} F(y) p_n
\bigl(y(n)\bigr) \,du \,dy =0.
\end{equation}

To do this we need more refined estimates of the density
$p_n(y(n))$. By Fourier analysis
\begin{eqnarray*}
p_n\bigl(y(n)\bigr) &=& (2\pi)^{- |\mathbf{m}|d }\int
_{\R^{|\mathbf{m}|d}} \exp \Biggl( -\frac{1}{2}\Var \langle\xi,X
\rangle
\\
&&\hspace*{94.1pt}{} -\iota \sum_{i=1}^{N} \sum
_{j=1}^{{m_i}/2} \biggl(\frac{ \xi^i_{2j}
\cdot y^i_{2j} }{n^H} +
\xi^i_{2j-1}\cdot y^i _{2j-1} \biggr)
\Biggr) \,d\xi.
\end{eqnarray*}

We choose a set $J$ of indexes of the form $(i,2j-1)$, where $1\le i
\le N$ and $1\le j\le\frac{m_i} 2$. For each index in $J$ we
introduce the operator
\[
\Delta_{i,2j-1} F\bigl(y^i_{2j-1}\bigr)= F
\bigl(y^i_{2j-1}\bigr)-F(0)
\]
and set $\Delta_{J}=\prod_{(i,2j-1) \in J}\Delta_{i,2j-1}$. Taking
into account that the integral on the variable $y^i_{2j-1}$ is zero, we
can replace $p_n(y(n))$ in (\ref{eq6}) by $\Delta_{J}p_n(y(n))$.
Using (\ref{equ2}), we obtain the following estimate:
%
\begin{eqnarray}
&& \bigl|\Delta_{J}p_n\bigl(y(n)\bigr) \bigr|
\nonumber\\
&&\qquad\leq c_5 \int_{\R^{|\mathbf{m}|d}}\exp \Biggl( -
\frac{k_{H}}{2} \sum^N_{i=1}\sum
^{{m_i}/{2}}_{j=1} \biggl( \bigl| \eta^i_{2j}
\bigr| ^2 \biggl( u^i_{2j } -\frac{u^i_{2j-1}}n -
u^i_{2j-2} \biggr)^{2H}
\nonumber
\\
&&\hspace*{125.5pt}\qquad\quad{} + \bigl| \eta^i_{2j} -n^H \eta_{2j-1}^i
\bigr|^2 \biggl( \frac
{u^i_{2j-1}}n \biggr)^{2H} \biggr) \Biggr)\nonumber\\
&&\qquad\quad\hspace*{34.3pt}{}\times
\prod_{i=1}^N \prod
_{j=1}^{{m_i}/2} \bigl|e^{-\iota\eta
^i_{2j-1} \cdot y^i_{2j-1} } -1 \bigr| \,d\eta
\\
&&\qquad= c_5 n^{- {|\mathbf{m}|Hd}/{2}} \nonumber\\
&&\qquad\quad{}\times\int_{\R^{|\mathbf
{m}|d}}\exp
\Biggl( -\frac{k_{H}}{2} \sum^N_{i=1}
\sum^{{m_i}/{2}}_{j=1} \biggl( \bigl|
\eta^i_{2j} \bigr|^2 \biggl( u^i_{2j }
-\frac{u^i_{2j-1}}n - u^i_{2j-2} \biggr)^{2H}
\nonumber
\\
&&\hspace*{172.2pt}\qquad\quad{} + \bigl| \eta_{2j-1}^i \bigr|^2 \biggl(
\frac{u^i_{2j-1}}n \biggr)^{2H} \biggr) \Biggr) \nonumber\\
&&\hspace*{39pt}\qquad\quad\times\prod
_{i=1}^N \prod_{j=1}^{{m_i}/2}
\bigl| e^{- \iota{( \eta^i_{2j} - \eta_{2j-1}^i )\cdot y^i_{2j-1} }/
{ n^H}} -1 \bigr| \,d\eta.
\nonumber
\end{eqnarray}
This shows that
%
\begin{equation}
\label{eq7} \biggl| \int_{\R^{|\mathbf{m}|d}} \int_{ \bigcup_{i=1}^N \bigcup
_{k=1}^{{m_i}/2} D^{n,1}_{\mathbf{m},K,i,k}} F(y)
p_n\bigl(y(n)\bigr) \,du \,dy \biggr| \leq c_6 \sum
_{i=1}^N \sum_{k=1}^{{m_i}/2}
I_{\mathbf{m},K,i,k},\hspace*{-35pt}
\end{equation}
where
\begin{eqnarray*}
&&
I_{\mathbf{m},K,i,k}
\\
&&\qquad= n^{- {|\mathbf{m}|Hd}/{2}}
\\
&&\qquad\quad{} \times\int_{D^{n,1}_{\mathbf{m},K,i,k}} \int_{\R^{2|\mathbf{m}|d}}
\bigl|F(y)\bigr|\\
&&\qquad\quad\hspace*{70pt}{}\times\exp \Biggl( -\frac{k_{H}}{2} \sum^N_{i=1}
\sum^{{m_i}/{2}}_{j=1} \biggl( \bigl|
\eta^i_{2j} \bigr| ^2 \biggl( u^i_{2j }
-\frac{u^i_{2j-1}}n - u^i_{2j-2} \biggr)^{2H}
\\
&&\qquad\quad\hspace*{207pt}{} + \bigl| \eta_{2j-1}^i \bigr| ^2 \biggl(
\frac{u^i_{2j-1}}n \biggr)^{2H} \biggr) \Biggr) \\
&&\qquad\quad\hspace*{77.5pt}{}\times\prod
_{i=1}^N \prod_{j=1}^{{m_i}/2}
\bigl| e^{-\iota{( \eta^i_{2j} - \eta_{2j-1}^i ) \cdot
y^i_{2j-1} }/ { n^H}} -1 \bigr| \,d\eta \,dy \,du.
\end{eqnarray*}

To estimate $I_{\mathbf{m},K,i,k}$ on the right-hand side of (\ref
{eq7}), we first consider
the integral in the variables $u=u^i_{2k}$, $v=u^i_{2k-1}$,
$w=\eta^i_{2k}$ and $z=\eta^i_{2k-1}$. Set $u^i_{2k-2}=u_0$
and $y^i_{2k-1}=y$. That is, we have the integral
\begin{eqnarray*}
\hspace*{-4pt}&& I_1(u_0,b_i,y)
\\
\hspace*{-4pt}&&\qquad= n^{-Hd} \int_{u_{0}}^{b_i} \int
_{K}^{{n(u-u_0)}/{2}} \int_{\R^{2d}} \exp
\biggl(-\frac{\kappa_H}{2} \biggl( |z|^2 \biggl(\frac vn
\biggr)^{2H} \\
\hspace*{-4pt}&&\qquad\quad\hspace*{170.1pt}{}+|w|^2 \biggl(u -u_0-\frac vn
\biggr)^{2H} \biggr) \biggr)
\\
\hspace*{-4pt}&&\hspace*{100.4pt}\qquad\quad{} \times \bigl( \bigl|e^{-\iota{z\cdot y
}/{n^H}}-1 \bigr|+ \bigl|e^{-\iota{w\cdot y }/{n^H}}-1 \bigr| \bigr) \,dw \,dz \,dv
\,du.
\end{eqnarray*}
We see that
\[
I_1(u_0,b_i,y) = I_2(u_0,b_i,y)+I_3(u_0,b_i,y),
\]
where
\begin{eqnarray*}
&&
I_2(u_0,b_i,y)
\\
&&\qquad= n^{-Hd} \int_{u_{0}}^{b_i} \int
_{K}^{{n(u-u_0)}/{2}} \int_{\R^{2d}} \exp
\biggl(-\frac{\kappa_H}{2} \biggl( |z|^2 \biggl(\frac vn
\biggr)^{2H} \\
&&\qquad\quad\hspace*{170.6pt}{}+|w|^2 \biggl(u -u_0-\frac vn
\biggr)^{2H} \biggr) \biggr)
\\
&&\hspace*{114pt}\qquad\quad{} \times \bigl|e^{-\iota{z \cdot y }/{n^H}}-1 \bigr| \,dw \,dz \,dv \,du
\end{eqnarray*}
and
\begin{eqnarray*}
&&
I_3(u_0,b_i,y)
\\
&&\qquad= n^{-Hd} \int_{u_{0}}^{b_i} \int
_K^{{n(u-u_0)}/{2}} \int_{\R
^{2d}} \exp
\biggl(-\frac{\kappa_H}{2} \biggl( |z|^2 \biggl(\frac vn
\biggr)^{2H}\\
&&\hspace*{203.4pt}{} +|w|^2 \biggl(u -u_0-\frac vn
\biggr)^{2H} \biggr) \biggr)
\\
&&\hspace*{114pt}\qquad\quad{} \times \bigl|e^{-\iota{w \cdot y }/{n^H}}-1 \bigr| \,dw \,dz \,dv \,du.
\end{eqnarray*}
Integrating with respect to $w$ and using the inequality
(\ref{ineq4}) in the \hyperref[app]{Appendix} lead us to
%
\begin{eqnarray}
\label{e320}
&&I_2(u_0,b_i,y)\nonumber\\
&&\qquad = c
n^{-Hd} \int_{u_{0}}^{b_i} \int
_K^{{n(u-u_0)}/{2}} \int_{\R^d} \biggl(u
-u_0-\frac vn \biggr)^{-Hd}
\nonumber\\
&&\qquad\quad\hspace*{115.5pt}{} \times e^{ -({\kappa_H |z|^2 }/{2})
(v/n)^{2H} } \bigl|e^{-\iota{z \cdot y }/{n^H}}-1 \bigr| \,dz \,dv \,du
\\
&&\qquad \le c K^{1-Hd-H\beta} |y|^{\beta} \int_{u_{0}}^{b_i}
(u-u_0)^{-Hd} \,du
\nonumber
\\
&&\qquad \le c K^{1-Hd-H\beta} (b_i-u_0)^{1-Hd}
|y|^{\beta}.
\nonumber
\end{eqnarray}
In a similar way, but with the application of (\ref{ineq3}) instead of
(\ref{ineq4}), we obtain
%
\begin{eqnarray}
\label{e321} && I_3(u_0,b_i,y)
\nonumber\\
&&\qquad = c \int_{u_{0}}^{b_i} \int_K^{{n(u-u_0)}/{2}}
v^{-Hd} \int_{\R^d} e^{ -({\kappa_H |w|^2 }/{2})  (u -u_0-v/n
)^{2H} } \nonumber\\
&&\qquad\quad\hspace*{114pt}{}\times\bigl|
e^{-\iota{w\cdot y }/{n^H}}-1 \bigr| \,dw \,dv \,du
\\
&&\qquad \le c n^{1-Hd-H\beta} |y|^{\beta} \int_{u_{0}}^{b_i}
(u-u_0)^{1-2Hd-H\beta} \,du
\nonumber
\\
&&\qquad \le c n^{1-Hd-H\beta} (b_i-u_0)^{2-2Hd-H\beta}
|y|^{\beta}.
\nonumber
\end{eqnarray}
Combining (\ref{e320}) and (\ref{e321}) gives
%
\begin{eqnarray}
\label{e322} && I_1(u_0,b_i,y)
\nonumber\\[-2pt]
&&\qquad\le c K^{1-Hd-H\beta} (b_i-a_i)^{1-Hd}
|y|^{\beta}
\\[-2pt]
&&\qquad\quad{} + c n^{1-Hd-H\beta} (b_i-a_i)^{2-2Hd-H\beta}
|y|^{\beta}.
\nonumber
\end{eqnarray}
Once this is done, we proceed to consider the integrals in the
variables $u=u^j_{2l}$, $v=u^j_{2l-1}$, $z=\eta^j_{2l}$ and
$w=\eta^j_{2l-1}$ with indices $(j,l)\neq(i,k)$. Set also
$u^j_{2l-2}=u_0$ and $y^j_{2l-1}=y$. That is, we have the integral
\begin{eqnarray*}
&& I_4(u_0,b_j,y)
\\[-2pt]
&&\qquad= n^{-Hd} \int_{u_{0}}^{b_j} \int
_0^{n(u-u_0)} \int_{\R^{2d}} \exp
\biggl(-\frac{\kappa_H}{2} \biggl( |z|^2 \biggl(\frac vn
\biggr)^{2H} \\[-2pt]
&&\qquad\quad\hspace*{163.4pt}{}+|w|^2 \biggl(u -u_0-\frac vn
\biggr)^{2H} \biggr) \biggr)
\\[-2pt]
&&\hspace*{107.7pt}\qquad\quad{} \times \bigl( \bigl|e^{-\iota{z\cdot y
}/{n^H}}-1 \bigr|\\[-2pt]
&&
\qquad\quad\hspace*{124.7pt}{}
+ \bigl|e^{-\iota{w\cdot y }/{n^H}}-1 \bigr| \bigr) \,dw \,dz \,dv
\,du.
\end{eqnarray*}
We can decompose this integral into two components,
\[
I_4(u_0,b_j,y)=I_5(u_0,b_j,y)+I_6(u_0,b_j,y),
\]
where
\begin{eqnarray*}
&& I_5(u_0,b_j,y)
\\[-2pt]
&&\qquad= n^{-Hd} \int_{u_{0}}^{b_j} \int
_0^{n(u-u_0)} \int_{\R^{2d}} \exp
\biggl(-\frac{\kappa_H}{2} \biggl( |z|^2 \biggl(\frac vn
\biggr)^{2H} \\[-2pt]
&&\qquad\quad\hspace*{163.4pt}{}+|w|^2 \biggl(u -u_0-\frac vn
\biggr)^{2H} \biggr) \biggr)
\\[-2pt]
&&\hspace*{106.6pt}\qquad\quad{} \times \bigl|e^{-\iota{z\cdot y }/{n^H}}-1 \bigr| \,dw \,dz \,dv \,du
\\[-2pt]
&&\qquad= c n^{-Hd} \int_{u_{0}}^{b_j} \int
_0^{n(u-u_0)} \int_{\R^d} \biggl(u
-u_0-\frac vn \biggr)^{-Hd} e^{ -({\kappa_H |z|^2 }/{2})
(v/n)^{2H} }
\\[-2pt]
&&\hspace*{104.6pt}\qquad\quad{} \times \bigl|e^{-\iota{z\cdot y }/{n^H}}-1 \bigr| \,dz \,dv
\,du
\end{eqnarray*}
and
\begin{eqnarray*}
&& I_6(u_0,b_j,y)
\\[-2pt]
&&\qquad= n^{-Hd} \int_{u_{0}}^{b_j} \int
_0^{n(u-u_0)} \int_{\R^{2d}} \exp
\biggl(-\frac{\kappa_H}{2} \biggl( |z|^2 \biggl(\frac vn
\biggr)^{2H} \\[-2pt]
&&\qquad\quad\hspace*{163.4pt}{}+|w|^2 \biggl(u -u_0-\frac vn
\biggr)^{2H} \biggr) \biggr)
\\[-2pt]
&&\hspace*{107.5pt}\qquad\quad{} \times \bigl|e^{-\iota{w\cdot y }/{n^H}}-1 \bigr| \,dw \,dz \,dv \,du
\\[-2pt]
&&\qquad= c \int_{u_{0}}^{b_j} \int_0^{n(u-u_0)}
v^{-Hd} \int_{\R^d} e^{ -({\kappa_H |w|^2 }/{2})  (u -u_0-v/n  )^{2H}
} \\[-2pt]
&&\qquad\quad\hspace*{110pt}{}\times\bigl|
e^{-\iota{w\cdot y }/{n^H}}-1 \bigr| \,dw \,dv \,du.
\end{eqnarray*}
Inequalities (\ref{ineq2}) and (\ref{ineq1}) imply that
\begin{eqnarray*}
I _4(u_0,b_j,y) & \le & c \int
_{u_{0}}^{b_j} (u-u_0)^{-Hd}
|y|^{{1}/H-d} \,du
\\
& = & c (b_j-u_0)^{1-Hd} |y|^{{1}/H-d}
\\
& \le & c (b_j-a_j)^{1-Hd} |y|^{{1}/H-d}.
\end{eqnarray*}
The remaining integrals can be dealt with in the same way as
$I_4(u_0, b_j, y)$. Thus statement (\ref{eq6}) follows. The
proof is complete.
\end{pf}

\begin{pf*}{Proof of Theorem \ref{th1}} This follows
from Lemma \ref{lema2}, Propositions \ref{prop1}, \ref{prop2} and
\ref{prop4} by the method of moments.
\end{pf*}

\begin{Remark*}\label{Rem}
Although the constant $C_{H,d}$ is finite for $H>\frac1{d+2}$, the
proof of Theorem \ref{th1} only works for $H> \frac1{d+1}$.
The reason for this is that for any $y\in\mathbb{R}^d$,
\[
\int_{\mathbb{R}^d} \exp \biggl( -\frac\kappa2 |\xi|^2
u^{2H} \biggr) \bigl(1- e^{\iota\xi\cdot y} \bigr) \,d\xi = \biggl(
\frac{2\pi} \kappa \biggr)^{d/2} u^{-Hd} \biggl(1- \exp
\biggl( - \frac{ |y|^2}{2\kappa u^{2H}} \biggr) \biggr),
\]
which is bounded by $c|y|^2u^{-H(d+2)}$, while, on the other hand,
\[
\int_{\mathbb{R}^d} \exp \biggl( -\frac\kappa2 |\xi|^2
u^{2H} \biggr) \bigl\llvert e^{\iota\xi\cdot y} -1 \bigr\rrvert \,d\xi \le
c |y| u^{-H(d+1)}.
\]
So, any type of estimation procedure, like the one based on the local
nondeterminism property used in this paper, will lead to an upper bound
of the form $u^{-H(d+1)}$.
\end{Remark*}

\begin{appendix}\label{app}
\section*{Appendix}

Here we give some lemmas which are necessary in the proof of Theorem~\ref{th1}.

\setcounter{theorem}{0}
\begin{lemma} \label{lema1} Let $0<\be<2$. If $f\in H_0^\beta$, then
$\Vert f\Vert_{\beta}^{2}$ given in (\ref{beta}) is
well defined and
\[
\Vert f\Vert_{\beta}^{2} = c^{-1}_{\beta,d }
\int_{\R^d}\bigl|\mathcal{F}f(\xi)\bigr|^{2}|\xi
|^{-\beta-d} \,d\xi\geq0,\vadjust{\goodbreak}
\]
where $\mathcal{F}f(\xi)$ denotes the Fourier transform of $f$, and
\[
c_{\beta, d }= \int_{\R^d} \bigl(1-\cos(x\cdot\xi)\bigr)
|\xi|^{-\beta
-d} \,d\xi>0
\]
is independent of $x$ if $|x|=1$.
\end{lemma}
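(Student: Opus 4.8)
The plan is to reduce everything to the classical integral (or \emph{subordination}) representation of the kernel $|z|^\beta$. Specifically, I would first prove that for $0<\beta<2$ and every $z\in\R^d$,
\[
|z|^\beta = c_{\beta,d}^{-1}\int_{\R^d}\bigl(1-\cos(z\cdot\xi)\bigr)|\xi|^{-\beta-d}\,d\xi .
\]
The integral on the right is absolutely convergent: near $\xi=0$ one has $1-\cos(z\cdot\xi)=O(|\xi|^2)$, so the integrand is $O(|\xi|^{2-\beta-d})$, which is locally integrable because $\beta<2$; near infinity the factor $1-\cos$ is bounded, so the integrand is $O(|\xi|^{-\beta-d})$, integrable because $\beta>0$. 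Substituting $\xi\mapsto\xi/\lambda$ shows the integral is homogeneous of degree $\beta$ in $z$, and the rotational invariance of $|\xi|^{-\beta-d}$ shows it depends on $z$ only through $|z|$; hence it equals $|z|^\beta$ times its value at any unit vector, which is precisely the constant $c_{\beta,d}$ of the statement. Positivity of $c_{\beta,d}$ is immediate since the integrand is nonnegative and not identically zero.

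Next I would establish that $\|f\|_\beta^2$ is well defined, i.e.\ that the defining double integral converges absolutely. Using the elementary inequality $(a+b)^\beta\le a^\beta+b^\beta$, valid for $0<\beta\le1$, together with $|x-y|\le|x|+|y|$, gives $|x-y|^\beta\le|x|^\beta+|y|^\beta$, so
\[
\int_{\R^{2d}}|f(x)f(y)|\,|x-y|^\beta\,dx\,dy \le 2\Bigl(\int_{\R^d}|f(x)|\,dx\Bigr)\Bigl(\int_{\R^d}|f(y)|\,|y|^\beta\,dy\Bigr)<\infty,
\]
by the very definition of $H_0^\beta$.

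Finally I would insert the representation of $|x-y|^\beta$ into \eref{beta} and interchange the order of integration. The crucial point here, and the step I expect to be the main obstacle, is that $|\xi|^{-\beta-d}$ is \emph{not} integrable on $\R^d$, so one cannot split $1-\cos$ into its two summands before applying Fubini. Instead I would keep the combination $1-\cos\bigl((x-y)\cdot\xi\bigr)$ intact: the triple integral of the absolute value then equals $c_{\beta,d}\int_{\R^{2d}}|f(x)f(y)|\,|x-y|^\beta\,dx\,dy$, which is finite by the previous step, so Fubini applies. After interchanging, for each fixed $\xi$ the inner double integral splits, the constant part $\bigl(\int_{\R^d}f\bigr)^2$ vanishes because $f$ has mean zero, and the cosine part equals $|\mathcal{F}f(\xi)|^2$ upon writing $\cos((x-y)\cdot\xi)$ in terms of exponentials and using that $f$ is real. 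Collecting signs yields
\[
\|f\|_\beta^2 = c_{\beta,d}^{-1}\int_{\R^d}|\mathcal{F}f(\xi)|^2\,|\xi|^{-\beta-d}\,d\xi\ge0,
\]
which is the assertion; in particular the right-hand integral is finite.
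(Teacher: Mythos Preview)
Your proposal is correct and follows essentially the same route as the paper: establish the identity $\int_{\R^d}(1-\cos(z\cdot\xi))|\xi|^{-\beta-d}\,d\xi=c_{\beta,d}|z|^\beta$ via rotational invariance and scaling, insert it into the definition of $\|f\|_\beta^2$, and use the mean-zero condition to pass to $|\mathcal{F}f(\xi)|^2$. The paper's version is terser---it simply cites Stein for the Fourier step---while you supply the absolute-convergence and Fubini justifications explicitly, which is a welcome addition rather than a different argument.
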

\begin{pf}
For any $x\in S^{d-1}$ and any $d\times d$ orthogonal matrix $Q$,
the change of variable $\xi=Q\eta$ yields
\[
\int_{\R^d} \bigl(1-\cos(x\cdot\xi) \bigr) |
\xi|^{-\beta-d} \,d\xi =\int_{\R^d} \bigl(1-\cos\bigl(
\bigl(Q^Tx\bigr)\cdot\eta\bigr) \bigr) |\eta|^{-\beta-d} \,d
\eta>0.
\]
This shows that $\int_{\R^d}  (1-\cos(x\cdot\xi) )
|\xi|^{-\beta-d}  \,d\xi$ depends only on $|x|$. The substitution
$\xi=\frac1{|x|}\eta$ gives us $\int_{\R^d}  (1-\cos(x\cdot
\xi) ) |\xi|^{-\beta-d}  \,d\xi=c_{\be, d} |x|^{\be}$. Then an
elementary result from Fourier analysis \cite{stein} yields
\begin{eqnarray*}
\Vert f\Vert_{\beta}^{2} 
&=& c^{-1}_{\beta,d }
\int_{\R^{2d}}f(x)f(y) \biggl( \int_{\R^d}
\bigl(e^{\iota(x-y)\cdot\xi} -1 \bigr) |\xi|^{-\beta-d}\,d\xi \biggr) \,dx \,dy
\\
&=& c^{-1}_{\beta,d }\int_{\R^d}\bigl|
\mathcal{F}f(\xi)\bigr|^{2}|\xi |^{-\beta-d} \,d\xi\geq0.
\end{eqnarray*}
\upqed\end{pf}

\begin{lemma} \label{lema11} Assume that $1-H<Hd<1$. Let $X$ be a
$d$-dimensional centered normal random vector with covariance matrix
$\sigma^2I$. Then, for any $n\in\N$ and $y\in\R^d$, there exists a
constant $c$ depending only on $H$ and $d$ such that
\[
\int^{\infty}_{0} u^{-Hd} \E \biggl|\exp \biggl(
\iota\frac{y\cdot
X}{n^Hu^H} \biggr)-1 \biggr| \,du\leq c n^{Hd-1}|y|^{{1}/{H}-d}.
\]
\end{lemma}
\begin{pf} It suffices to show the above inequality when $y\neq0$.
Making the change of variable $v=|y|^{-{1}/{H}}nu$ gives
\begin{eqnarray*}
&&
\int^{\infty}_{0} u^{-Hd} \E \biggl|\exp
\biggl(\iota\frac{y\cdot
X}{n^Hu^H} \biggr)-1 \biggr| \,du
\\
&&\qquad=n^{Hd-1}|y|^{{1}/{H}-d} \int^{\infty}_{0}
v^{-Hd} \E \biggl|\exp \biggl(\iota\frac{y\cdot X}{|y| v^H} \biggr)-1 \biggr| \,dv
\\
&&\qquad\leq n^{Hd-1}|y|^{{1}/{H}-d} \int^{\infty}_{0}
v^{-Hd} \bigl(2\wedge v^{-H}\E|X| \bigr) \,dv
\\
&&\qquad= c n^{Hd-1}|y|^{{1}/{H}-d}.
\end{eqnarray*}
\upqed\end{pf}

\begin{lemma} \label{lema3}
Assume $1-H\leq Hd<1$ and $0\leq\beta\leq1$ with $H\beta<1-Hd$.
Then, for all $n\in\N$, there exists a constant $c$ independent of
$n$ such that
%
\setcounter{equation}{0}
\begin{eqnarray}
\label{ineq1}
&&\int_{0}^{ns}u^{-Hd-H\beta} \,du
\int_{\R^d}e^{-\kappa
|\xi|^{2}(s-{u}/{n})^{2H}}\bigl|1-e^{\iota{\xi\cdot
y}/{n^{H}}}\bigr| \,d\xi\nonumber\\[-8pt]\\[-8pt]
&&\qquad\leq c
n^{-H\beta} s^{-Hd-H\beta}|y|^{{1}/{H}-d}\nonumber
\end{eqnarray}
and
%
\begin{eqnarray}
\label{ineq2}\quad &&
n^{-Hd-H\beta}\int_{0}^{ns}
\biggl(s-\frac{u}{n} \biggr)^{-Hd-H\beta} \,du
\int_{\R^d}e^{-\kappa|\xi|^{2}({u}/{n})^{2H}}\bigl|1-e^{\iota
{\xi
\cdot y}/{n^{H}}}\bigr|
\,d\xi
\nonumber\\[-8pt]\\[-8pt]
&&\qquad \leq c n^{-H\beta} s^{-Hd-H\beta}|y|^{
{1}/{H}-d},
\nonumber
\end{eqnarray}
where $\kappa$ is a positive constant.
\end{lemma}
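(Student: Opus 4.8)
The plan is to first collapse the inner Gaussian integral over $\xi$ to an elementary one-variable quantity, then to observe that \eqref{ineq1} and \eqref{ineq2} reduce to one and the same one-dimensional integral, which I estimate by splitting the time domain into a region near $0$ and a region near $s$.

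First I would handle the inner integral. Writing $a$ for the relevant time factor ($a=s-u/n$ in \eqref{ineq1} and $a=u/n$ in \eqref{ineq2}), the substitution $\eta=a^{H}\xi$ gives
\[
\int_{\R^d}e^{-\kappa|\xi|^{2}a^{2H}}\big|1-e^{\iota\xi\cdot y/n^{H}}\big|\,d\xi=a^{-Hd}\int_{\R^d}e^{-\kappa|\eta|^{2}}\big|1-e^{\iota\eta\cdot y/(na)^{H}}\big|\,d\eta.
\]
By rotational invariance the last integral is a function $\Phi$ of $r=|y|/(na)^{H}$ alone, and the bound $|1-e^{\iota z}|\le 2\wedge|z|$ together with $\int_{\R^d}e^{-\kappa|\eta|^{2}}|\eta|\,d\eta<\infty$ yields $\Phi(r)\le c\,(1\wedge r)$. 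Hence the inner integral is bounded by $c\,a^{-Hd}\big(1\wedge|y|/(na)^{H}\big)$. Inserting this estimate and changing variables $u=nt$ with $t\in(0,s)$ (and, in the case of \eqref{ineq1}, reflecting $t\mapsto s-t$), a direct computation shows that both left-hand sides are controlled by $c\,n^{1-Hd-H\beta}\,J$, where
\[
J=\int_0^s (s-t)^{-Hd-H\beta}\,t^{-Hd}\Big(1\wedge\frac{|y|}{(nt)^{H}}\Big)\,dt.
\]
It therefore suffices to prove $J\le c\,n^{Hd-1}\,s^{-Hd-H\beta}\,|y|^{\frac1H-d}$, since multiplying by $n^{1-Hd-H\beta}$ produces exactly $c\,n^{-H\beta}s^{-Hd-H\beta}|y|^{\frac1H-d}$.

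To estimate $J$ I would split at $t=s/2$. On $(0,s/2)$ I bound $(s-t)^{-Hd-H\beta}\le c\,s^{-Hd-H\beta}$, extend the $t$-integral to $(0,\infty)$, and scale $t=|y|^{1/H}\rho/n$; this extracts precisely $|y|^{\frac1H-d}\,n^{Hd-1}$ times the constant $\int_0^\infty\rho^{-Hd}(1\wedge\rho^{-H})\,d\rho$, which is finite exactly because $Hd<1$ (integrability at $0$) and $H(d+1)>1$ (integrability at $\infty$). On $(s/2,s)$ I bound $t^{-Hd}\le c\,s^{-Hd}$ and use $(1\wedge x)\le x^{\frac1H-d}$, so that the truncation factor contributes $|y|^{\frac1H-d}(ns)^{-(1-Hd)}$, while $(s-t)^{-Hd-H\beta}$ is integrable at $t=s$ because $H\beta<1-Hd$. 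Adding the two pieces gives the claimed bound on $J$, and hence both \eqref{ineq1} and \eqref{ineq2}.

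The main obstacle is precisely the borderline Hölder exponent $\frac1H-d$. One cannot estimate $1\wedge|y|/(nt)^{H}$ by a single power $(\cdot)^{\gamma}$ over the whole interval: taking $\gamma=\frac1H-d$ everywhere produces a non-integrable $t^{-1}$ factor at the origin, while any smaller $\gamma$ fails to deliver the required power $|y|^{\frac1H-d}$. The two-region split is what resolves this, keeping the sharp truncation $1\wedge(\cdot)$ and a scaling argument near $t=0$ and switching to the power bound near $t=s$, where $t\asymp s$ stays away from the origin. The three hypotheses play complementary roles: $Hd<1$ controls the singularity of $t^{-Hd}$ at $0$; the condition $1-H\le Hd$, equivalently $\frac1H-d\le 1$ and $H(d+1)\ge1$, is what both makes the power bound $(1\wedge x)\le x^{\frac1H-d}$ admissible and (with the strict inequality available throughout the theorem's range) makes the scaling constant finite; and $H\beta<1-Hd$ guarantees integrability of $(s-t)^{-Hd-H\beta}$ at $t=s$.
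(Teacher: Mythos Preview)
Your argument is correct and follows essentially the same route as the paper: the paper likewise scales out the Gaussian integral, reduces \eqref{ineq1} to \eqref{ineq2} by a change of variable, splits the $u$-integral at the midpoint, invokes the scaling identity of Lemma~\ref{lema1.1} on the piece near $0$, and applies the power bound $|1-e^{\iota z}|\le |z|^{\beta_1}$ with $\beta_1=\frac1H-d$ on the far piece. Your unification of the two inequalities into a single integral $J$ via the reflection $t\mapsto s-t$ is a tidy presentational improvement, but the underlying estimates are the same; note also that both your argument and the paper's use of Lemma~\ref{lema1.1} implicitly require the strict inequality $H(d+1)>1$, which, as you observe, holds throughout the range of Theorem~\ref{th1}.
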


\begin{pf} We first note that (\ref{ineq1}) follows easily from
(\ref{ineq2}) by the change of variable. So, it suffices to prove
(\ref{ineq2}). Denote the left-hand side of (\ref{ineq2}) by $I$. We
then have
\begin{eqnarray*}
I & = & n^{-Hd-H\beta}\int_{0}^{ns} \biggl(s-
\frac{u}{n} \biggr)^{-Hd-H\beta
} \,du\int_{\R^d}e^{-\kappa|\xi| ^{2}(
{u}/{n})^{2H}}\bigl|1-e^{\iota{\xi
\cdot y}/{n^{H}}}\bigr|
\,d\xi
\\
& = & n^{-H\beta} \int_{0}^{ns}\biggl(s-
\frac{u}{n}\biggr)^{-Hd-H\beta}u^{-Hd}\int_{\R
^d}e^{-\kappa
|x|^{2}}
\bigl|1-e^{\iota{x\cdot y}/{u^{H}}} \bigr| \,dx \,du,
\end{eqnarray*}
where we made the change of variable $\xi(\frac{u}{n})^H=x$.

By Lemma \ref{lema11}, we obtain
\begin{eqnarray*}
&& \int_{0}^{{ns}/{2}}\biggl(s-\frac{u}{n}
\biggr)^{-Hd-H\beta}u^{-Hd}\int_{\R^d}e^{-\kappa|x|^{2}}
\bigl|1-e^{\iota{x\cdot y}/{u^{H}}}\bigr| \,dx \,du
\\
&&\qquad \leq c_1 s^{-Hd-H\beta}\int_{0}^{\infty}u^{-Hd}
\int_{\R
^d}e^{-\kappa|x|^{2}} \bigl|1-e^{\iota{x\cdot y}/{u^{H}}}\bigr| \,dx \,du
\\
&&\qquad = c_2 s^{-Hd-H\beta}|y|^{{1}/{H}-d}.
\end{eqnarray*}

On the other hand,
\begin{eqnarray*}
&& \int^{ns}_{{ns}/{2}}\biggl(s-\frac{u}{n}
\biggr)^{-Hd-H\beta}u^{-Hd}\int_{\R^d}
e^{-\kappa|x|^{2}} \bigl|1-e^{\iota{x\cdot y}/{u^{H}}}\bigr| \,dx \,du
\\
&&\qquad \leq\int^{ns}_{{ns}/{2}}\biggl(s-\frac{u}{n}
\biggr)^{-Hd-H\beta} u^{-Hd-H\beta_1}\int_{\R^d}e^{-\kappa|x|^{2}}|x
\cdot y|^{\beta
_1} \,dx \,du
\\
&&\qquad \leq c_3 (ns) ^{-Hd-H\beta_1} | y|^{\beta_1} \int
^{ns}_{{ns}/{2}}\biggl(s-\frac{u}{n}
\biggr)^{-Hd-H\beta}\,du
\\
&&\qquad \leq c_4 n^{1-Hd-H\beta_1}s^{1-H\beta-2Hd-H\beta
_1}|y|^{\beta_1},
\end{eqnarray*}
where $\beta_1$ can be any constant in $[0,1]$, and we used $H\beta
<1-Hd$ in the last inequality. Our result follows by choosing $\beta
_1=\frac{1}{H}-d$. This is possible because $1-H\leq Hd<1$.
\end{pf}

\begin{lemma} \label{lema4} For $n\in\N$, we assume that $1-H< Hd<1$
and $0<K<\frac{ns}{2}$. Then there
exists a constant $c$ independent of $n$ and $K$ such that
%
\begin{eqnarray}
\label{ineq3}
&&\int^{{ns}/{2}}_{K}u^{-Hd} \,du
\int_{\R^d}e^{-\kappa|\xi|
^{2}(s-{u}/{n})^{2H}}\bigl|1-e^{\iota{\xi\cdot y}/{n^{H}}}\bigr| \,d\xi \nonumber\\[-8pt]\\[-8pt]
&&\qquad\leq c
n^{1-Hd-H\beta}s^{1-2Hd-H\beta}|y|^{\beta}\nonumber
\end{eqnarray}
and
%
\begin{eqnarray}
\label{ineq4} && n^{-Hd}\int_{K}^{{ns}/{2}}
\biggl(s-\frac{u}{n} \biggr)^{-Hd} \,du\int_{\R^d}e^{-\kappa|\xi|
^{2}({u}/{n})^{2H}}\bigl|1-e^{\iota{\xi\cdot y}/{n^{H}}}\bigr|
\,d\xi
\nonumber\\[-8pt]\\[-8pt]
&&\qquad \leq c s^{-Hd} K^{1-Hd-H\beta} |y|^{\beta},
\nonumber
\end{eqnarray}
where $\kappa$ and $\beta$ are positive constants with
$1-Hd< H\beta<H\wedge(2-2Hd)$.
\end{lemma}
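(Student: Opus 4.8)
The plan is to isolate a single estimate on the inner Gaussian integral and then carry out the two remaining one-dimensional integrations separately, the only real issue being which endpoint of the $u$-integral dominates in each case. The common ingredient is the following bound, valid for every $t>0$: using $|1-e^{\iota z}|\le c_\beta|z|^\beta$ for $z\in\R$ and $\beta\in[0,1]$ (here $\beta<1$ because $H\beta<H$), followed by the scaling substitution $x=t^H\xi$, one obtains
\[
\int_{\R^d}e^{-\kappa|\xi|^2 t^{2H}}\big|1-e^{\iota\frac{\xi\cdot y}{n^H}}\big|\,d\xi
\le c\,n^{-H\beta}|y|^\beta\Big(\int_{\R^d}|x|^\beta e^{-\kappa|x|^2}\,dx\Big)\,t^{-Hd-H\beta}
= c\,n^{-H\beta}|y|^\beta\, t^{-Hd-H\beta},
\]
with $c$ independent of $n$, $K$, $y$ and $t$. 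Both \eref{ineq3} and \eref{ineq4} reduce to this estimate; they differ only in the choice $t=s-\frac un$ or $t=\frac un$.

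For \eref{ineq3} I would take $t=s-\frac un$. On the truncated range $K<u<\frac{ns}2$ we have $s-\frac un\ge\frac s2$, so $\big(s-\frac un\big)^{-Hd-H\beta}\le c\,s^{-Hd-H\beta}$, and it remains to estimate $c\,n^{-H\beta}|y|^\beta s^{-Hd-H\beta}\int_K^{ns/2}u^{-Hd}\,du$. Since $Hd<1$ the exponent $-Hd$ exceeds $-1$, so the integral is controlled by its upper endpoint, $\int_K^{ns/2}u^{-Hd}\,du\le\int_0^{ns/2}u^{-Hd}\,du\le c\,(ns)^{1-Hd}$; collecting the powers of $n$ and $s$ gives exactly $c\,n^{1-Hd-H\beta}s^{1-2Hd-H\beta}|y|^\beta$.

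For \eref{ineq4} I would take $t=\frac un$ and write $\big(\frac un\big)^{-Hd-H\beta}=n^{Hd+H\beta}u^{-Hd-H\beta}$, again bounding $\big(s-\frac un\big)^{-Hd}\le c\,s^{-Hd}$ on the truncated range. The factor $n^{Hd+H\beta}$ cancels against the prefactor $n^{-Hd}$ and the $n^{-H\beta}$ coming from the inner bound, leaving $c\,|y|^\beta s^{-Hd}\int_K^{ns/2}u^{-Hd-H\beta}\,du$. This is precisely where the hypothesis $1-Hd<H\beta$ enters: it makes $Hd+H\beta>1$, so the exponent $-Hd-H\beta$ lies below $-1$ and the integral is now controlled by its lower endpoint, $\int_K^{ns/2}u^{-Hd-H\beta}\,du\le\int_K^{\infty}u^{-Hd-H\beta}\,du\le c\,K^{1-Hd-H\beta}$, which yields the claimed $c\,s^{-Hd}K^{1-Hd-H\beta}|y|^\beta$.

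I do not anticipate any genuine difficulty; the whole content is the book-keeping of which endpoint dominates, and this dichotomy is exactly what the hypotheses encode. The assumption $Hd<1$ makes the upper endpoint win in \eref{ineq3}, while $1-Hd<H\beta$ makes the lower endpoint win in \eref{ineq4}; the condition $H\beta<H$ is used only to guarantee $\beta<1$ in $|1-e^{\iota z}|\le c_\beta|z|^\beta$; and the truncation $u<\frac{ns}2$ is what keeps the other time factor $s-\frac un$ bounded below by $\frac s2$, so that it contributes only a harmless constant rather than a singularity. The standing assumption $1-H<Hd$ is what makes the admissible range $1-Hd<H\beta<H\wedge(2-2Hd)$ nonempty, and the remaining bound $H\beta<2-2Hd$, although not needed for \eref{ineq3} and \eref{ineq4} themselves, will secure the convergence of the outer $u$-integration when these estimates are applied in the proof of Proposition \ref{prop4}.
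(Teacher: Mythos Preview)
Your proposal is correct and follows essentially the same route as the paper: both proofs rest on the elementary bound $|1-e^{\iota z}|\le c_\beta|z|^\beta$, the observation that $s-\frac un\ge\frac s2$ on the truncated range, and the dichotomy that $Hd<1$ makes the upper endpoint dominate in \eref{ineq3} while $Hd+H\beta>1$ makes the lower endpoint dominate in \eref{ineq4}. The paper simply packages things slightly differently---it refers \eref{ineq3} back to the proof of Lemma~\ref{lema3} and, for \eref{ineq4}, first substitutes $u\mapsto nu$ before applying the same $|\cdot|^\beta$ bound---but the substance is identical.
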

\begin{pf}Inequality (\ref{ineq3}) follows easily from the proof of
Lemma \ref{lema3}.
We shall show (\ref{ineq4}). Denote the left-hand side of
(\ref{ineq4}) by $I$. Then we have
\begin{eqnarray*}
I & = & n^{1-Hd}\int_{K/n}^{s/2}(s-u)^{-Hd}
\,du\int_{\R^d}e^{-\kappa
|\xi|^{2}u^{2H}}\bigl|1-e^{\iota{\xi\cdot y}/{n^{H}}}\bigr| \,d\xi
\\
& \leq & c |y|^{\beta} n^{1-Hd-H\beta} \int_{K/n}^{s/2}(s-u)^{-Hd}
u^{-Hd-H\beta} \,du.
\end{eqnarray*}
Since $\frac{K}{n}<\frac{s}{2}$ and $1-Hd< H\beta$, we have
\begin{eqnarray*}
\int_{K/n}^{s/2}(s-u)^{-Hd}
u^{-Hd-H\beta} \,du &\leq & c s^{-Hd}\int_{K/n}^{s/2}
u^{-Hd-H\beta} \,du
\\
& \leq & c s^{-Hd} \biggl(\frac{K}{n} \biggr)^{1-Hd-H\beta}.
\end{eqnarray*}
Therefore,
\[
I \leq c s^{-Hd} K^{1-Hd-H\beta} |y|^{\beta}.
\]
This proves the lemma.
\end{pf}

\begin{lemma} \label{lema5} For any $K>0$ and $n\in\N$, there exist
constants $c_1$ and $c_2$ independent of $n$ such that
%
\begin{equation}
\label{ineq5} \int^{K\wedge nu}_0 v^{-Hd}
\biggl(u-\frac{v}{n} \biggr)^{-Hd} \,dv \leq c_1K^{1-Hd}u^{-Hd}
\end{equation}
and
%
\begin{equation}\label{ineq6}
\int^{u}_0 v^{-Hd} (u-v
)^{-Hd} \,dv=c_2 u^{1-2Hd}.
\end{equation}
\end{lemma}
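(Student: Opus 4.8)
The plan is to treat the two formulas separately, since \eref{ineq6} is an exact Beta-integral computation while \eref{ineq5} requires a careful splitting of the range of integration to handle an endpoint singularity. Throughout I use only the standing hypothesis $Hd<1$, which guarantees $1-Hd>0$ and hence that all the integrals below converge.

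For the identity \eref{ineq6}, first I would rescale. The substitution $v=ut$, $dv=u\,dt$, turns $\int_0^u v^{-Hd}(u-v)^{-Hd}\,dv$ into $u^{1-2Hd}\int_0^1 t^{-Hd}(1-t)^{-Hd}\,dt$. The remaining integral is the Beta function $B(1-Hd,1-Hd)=\Gamma(1-Hd)^2/\Gamma(2-2Hd)$, which is finite precisely because $1-Hd>0$. Setting $c_2=B(1-Hd,1-Hd)$ gives \eref{ineq6}.

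For the estimate \eref{ineq5}, the idea is to split the interval $[0,K\wedge nu]$ at the point $nu/2$, so that on each piece exactly one of the two factors is well controlled. On $[0,\,(nu/2)\wedge K]$ one has $v/n\le u/2$, hence $(u-v/n)^{-Hd}\le 2^{Hd}u^{-Hd}$; pulling this factor out leaves $\int_0^{(nu/2)\wedge K} v^{-Hd}\,dv\le \int_0^K v^{-Hd}\,dv=K^{1-Hd}/(1-Hd)$, which already yields a contribution bounded by $c\,K^{1-Hd}u^{-Hd}$ with a constant independent of $n$. On the complementary piece $[nu/2,\,K\wedge nu]$, which is nonempty only when $K>nu/2$, I would instead bound $v^{-Hd}\le 2^{Hd}(nu)^{-Hd}$ and substitute $w=u-v/n$ (so that $v\in[nu/2,nu]$ corresponds to $w\in[0,u/2]$), obtaining $n\int_0^{u/2}w^{-Hd}\,dw=n(u/2)^{1-Hd}/(1-Hd)$. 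This produces a bound of order $n^{1-Hd}u^{1-2Hd}=(nu)^{1-Hd}u^{-Hd}$, and here I would use the defining constraint of this piece, $nu<2K$, together with the monotonicity of $x\mapsto x^{1-Hd}$, to replace $(nu)^{1-Hd}$ by $(2K)^{1-Hd}$, again giving $c\,K^{1-Hd}u^{-Hd}$. Adding the two contributions proves \eref{ineq5}.

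The main obstacle is the piece near $v=nu$, where the factor $(u-v/n)^{-Hd}$ blows up: there $v^{-Hd}$ is essentially constant and cannot absorb the singularity, so naively one only sees an $n$-dependent factor $(nu)^{1-Hd}$ rather than the desired $K^{1-Hd}$. The key observation that makes the argument work is that this bad region occurs only when $K>nu/2$, i.e. when $nu$ is itself comparable to $K$; exploiting this constraint is exactly what converts the spurious $n$-dependence into the uniform bound $K^{1-Hd}$. The integrability of $w^{-Hd}$ near $w=0$, namely $Hd<1$, is what allows the substitution step to produce a finite constant in the first place.
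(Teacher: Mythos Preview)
Your proof is correct and follows essentially the same approach as the paper's: the Beta-integral scaling for \eqref{ineq6} is identical, and for \eqref{ineq5} both arguments split at the midpoint $nu/2$ and exploit the fact that the singular piece near $v=nu$ appears only when $nu<2K$. The only cosmetic difference is that the paper first rescales to $[0,\tfrac{K}{nu}\wedge 1]$ and then case-splits on whether $nu\le 2K$, whereas you split the integration domain directly; the underlying estimates are the same.
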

\begin{pf} Inequality (\ref{ineq6}) follows easily from
\[
\int^{u}_0 v^{-Hd} (u-v
)^{-Hd} \,dv=u^{1-2Hd}\int^{1}_0
w^{-Hd} (1-w )^{-Hd} \,dw.
\]
We only need to show (\ref{ineq5}). Notice that
\[
\int^{K\wedge nu}_0 v^{-Hd} \biggl(u-
\frac{v}{n} \biggr)^{-Hd} \,dv = u^{-Hd}(nu)^{1-Hd}
\int^{{K}/({nu})\wedge1}_0 v^{-Hd} (1-v
)^{-Hd} \,dv.
\]
If $nu\leq2K$, then
\[
\int^{{K}/({nu})\wedge1}_0 v^{-Hd} (1-v
)^{-Hd} \,dv \leq\int^{1}_0
v^{-Hd} (1-v )^{-Hd} \,dv.
\]

If $nu> 2K$, then
\[
\int^{{K}/({nu})\wedge1}_0 v^{-Hd} (1-v
)^{-Hd} \,dv \leq2 \int^{{K}/({nu})}_0
v^{-Hd} \,dv \leq\frac{2}{1-Hd} \biggl(\frac{K}{nu}
\biggr)^{1-Hd}.
\]

Therefore,
\[
\int^{K\wedge nu}_0 v^{-Hd} \biggl(u-
\frac{v}{n} \biggr)^{-Hd} \,dv \leq c_3
K^{1-Hd} u^{-Hd}.
\]
\upqed\end{pf}
\end{appendix}




\printaddresses

\end{document}